\renewcommand{\figurename}{Fig.}
\theoremstyle{plain}
\newtheorem{theorem}{Theorem}
\newtheorem{proposition}{Proposition}
\newtheorem{lemma}{Lemma}
\theoremstyle{definition}
\newcommand{\tr}{\mathrm{tr}}
\newcommand{\diag}{\mathrm{diag}}
\newcommand{\pd}[1]{\partial_{#1}}
\newcommand{\KL}[2]{\mathrm{KL} \left( \, #1 \,\middle|\!\middle|\, #2 \, \right)}
\newcommand{\LB}{\Delta_{\mathrm{LB}}}
\begin{document}

\begin{frontmatter}

\title{Enriched standard conjugate priors and the right invariant prior for Wishart distributions}  

\author[1]{Hidemasa Oda \corref{mycorrespondingauthor}}
\author[1]{Fumiyasu Komaki}

\address[1]{Department of Mathematical Informatics, Graduate School of Information Science and Technology, The University of Tokyo, 7-3-1 Hongo, Bunkyo-ku, Tokyo 113-8656, Japan}

\cortext[mycorrespondingauthor]{Corresponding author. Email address: \url{hidemasa_oda@mist.i.u-tokyo.ac.jp}}

\begin{abstract}
  The prediction of the variance-covariance matrix of the multivariate normal distribution is important in the multivariate analysis.
  We investigated Bayesian predictive distributions for Wishart distributions under the Kullback--Leibler divergence.
  The conditional reducibility of the family of Wishart distributions enables us to decompose the risk of a Bayesian predictive distribution.
  We considered a recently introduced class of prior distributions, which is called the family of enriched standard conjugate prior distributions, and compared the Bayesian predictive distributions based on these prior distributions.
  Furthermore, we studied the performance of the Bayesian predictive distribution based on the reference prior distribution in the family
  and showed that there exists a prior distribution in the family that dominates the reference prior distribution.
  Our study provides new insight into the multivariate analysis when there exists an ordered inferential importance for the independent variables.
\end{abstract}

\begin{keyword}
Conditional Reducibility
\sep
Objective Bayes
\sep
Principle of Equivariance
\sep
Sample Variance-Covariance Matrix
\sep
Statistical Decision Theory
\sep
Wishart Distribution
\MSC[2020] Primary 62C10 \sep
Secondary 62H12
\end{keyword}

\end{frontmatter}

\section{Introduction}

The estimation of the variance-covariance matrix of the multivariate normal distribution has a long history.
Estimates of variance-covariance matrices are required at regression analysis, discriminant analysis, principal component analysis, and factor analysis (see \cite{press2005applied}).
The sample variance-covariance matrix $\hat{\Sigma}$ is the most traditional estimator of the variance-covariance matrix $\Sigma$ of the multivariate normal distribution.
The Stein-type shrinkage estimator is another approach for estimating the variance-covariance matrix (see \cite{ledoit2004well}).
Bayesian approaches for estimating the variance-covariance matrix under quadratic loss or entropy loss have also been investigated (see \cite{leonard1992bayesian, yang1994estimation, sun2007objective, hsu2012bayesian, sun2007objective} as examples).

This study is based on the framework of the Bayesian statistical decision theory.
We predict the distribution of the sample variance-covariance matrix $\hat{\Sigma}'$ of a future sample of size $N$ by observing the sample variance-covariance matrix $\hat{\Sigma}$ of the current sample of size $M$.
We use the Kullback--Leibler divergence as a loss function (see \cite{aitchison1975goodness}).
This loss function is non-negative, convex, and lower semi-continuous, which meets the basic needs for developing the statistical decision theory (see \cite{wald1949statistical, lecam1955extension}).
We develop prior distributions that lead to better prediction under the Kullback--Leibler loss.

The estimation of the principal sub-matrix of the variance-covariance matrix or its inverse matrix is important (see Appendix 1 in \cite{dawid1973marginalization} for example).
Several studies have shown that shrinkage prior distributions are helpful for prediction problems (see \cite{komaki2001shrinkage, george2006improved} for normal distributions, \cite{komaki2004simultaneous} for Poisson distributions, \cite{komaki2009bayesian} for two-dimensional Wishart distributions, and \cite{9429230} for complex-valued auto-regressive processes).
An attractive family of prior distributions, which is called the family of enriched standard conjugate prior distributions, for a specific hierarchical grouping of the parameters of the Wishart distribution was introduced in \cite{consonni2003enriched}.
These prior distributions are useful when there exists an ordered inferential importance for the independent variables in the multivariate analysis.
In this study, we show that the square root of the ratio of a prior distribution in this family to the Jeffreys prior distribution is an eigenfunction of the Laplace--Beltrami operator (see Proposition \ref{proposition:main_asymptotic}).
The family of prior distributions has a geometric interpretation in the statistical model because the Laplace--Beltrami operator is independent of the choice of parameterization.
We use the conditional reducibility (see \cite{consonni2001conditionally, consonni2003enriched}) of the family of Wishart distributions, allowing us to decompose the risks of Bayesian predictive distributions into several terms according to the parameter importance, and minimize each term on each parameter space.

One of the focuses in our study is the reference prior distribution, which is originally intended to describe a prior distribution that is not informative of the observations (see \cite{bernardo1979reference}).
That is, the reference prior distribution is defined as the prior distribution that maximizes the mutual information between parameters and observations (see \cite{berger2009formal}).
We note that the definition of the original reference prior distribution in terms of mutual information is invariant under reparameterization.
In this study, we investigate the reference approach, which is a procedure for constructing a non-informative prior distribution with respect to the order of the parameters, according to their inferential importance (see \cite{berger1992ordered, consonni2003enriched}).
The reference approach is advantageous because of its ability to distinguish between parameters of interest and nuisance parameters.
In this approach, we note that the definition of the reference prior distribution may depend on the specific order of the parameters.
In this study, we investigate a prior distribution in the family of enriched standard conjugate prior distributions that dominates the reference prior distribution (see Theorem \ref{theorem:main_exact}).

The remainder of the paper is organized as follows.
In Section \ref{section:bayesian_prediction_for_wishart_distribution}, a brief explanation of the decision-theoretic Bayesian prediction is provided using the Kullback--Leibler divergence.
In Section \ref{section:conditional_reducibility}, the definition of conditional reducibility for families of probability distributions are given.
The conditional reducibility of the family of Wishart distributions are discussed in Section \ref{section:conditional_reducibility_wishart}.
In Section \ref{section:main_asymptotic}, we investigate a prior distribution in the family of Wishart distributions that asymptotically dominates the reference prior distribution as $M \to \infty$.
The proposed prior distribution dominates the reference prior distribution for any value of $M$ and $N$, as demonstrated in Section \ref{section:main_exact}.
Moreover, we provide a geometric interpretation of the dominance of the prior distributions in the family of Wishart distributions over the Jeffreys prior distribution in Section \ref{section:geometry_of_bayesian_predictive_distribution}.
Furthermore, in Section \ref{section:relative_invariance}, we provide another interpretation using the relative invariance under the left action of the group of upper-triangular block matrices.

\section{Bayesian prediction for Wishart distributions}
\label{section:bayesian_prediction_for_wishart_distribution}

In this study, we consider the real Wishart distribution on the space $S_r^+ \left( \mathbb{R} \right)$ of positive definite symmetric $r \times r$ matrices, or the complex Wishart distribution on the space $H_r^+ \left( \mathbb{C} \right)$ of positive definite Hermitian $r \times r$ matrices.
However, the results presented in this study may be valid for other Wishart distributions associated with abstract Euclidean Jordan algebras (see \cite{faraut1994analysis} for the definition and classification of Wishart distributions).
The symmetric cone $S_r^+ \left( \mathbb{R} \right)$ or $H_r^+ \left( \mathbb{C} \right)$ is denote by $E_r^+$.
In this study, we use the notations presented in Table \ref{table:simple_euclidean_jordan_algebras},
where $r$ is the rank, $n$ is the dimension, and $d$ is the Peirce invariant of the symmetric cone $E_r^+$.
We have $n = r + r \left( r - 1 \right) d / 2$.

\begin{table}[htb]
  \caption{Classification of the symmetric cones, on which Wishart distributions are defined. $r$ is the rank, $n$ is the dimension, and $d$ is the Peirce invariant of the Symmetric cone $E_r^+$.}
  \vskip-0.3cm
  \hrule
  \smallskip
  \centering\small
  \begin{tabular}{lll}
    \label{table:simple_euclidean_jordan_algebras}
    $E_r^+$ & $n$ & $d$ \\
    $S_r (\mathbb{R})$ & $\frac{1}{2} r(r+1)$ & $1$ \\
    $H_r (\mathbb{C})$ & $r^2$ & $2$ \\
  \end{tabular}
  \hrule
\end{table}

First, we define the multivariate gamma function $\Gamma_r \left( \mu \right)$ to describe the Wishart distribution of rank $r$.
We use $\left| x \right|$ to denote the determinant of matrix $x$.
The multivariate gamma function is defined as
\begin{align*}
  \Gamma_r \left( \mu \right)
  &:=
  \int_{E_r^+} \left| x \right|^{\mu - \frac{n}{r} } \exp \left( - \tr \left( x \right) \right) dx
\end{align*}
for $\mu > \left( r - 1 \right) d / 2$,
where $dx$ is the Lebesgue measure on $E_r^{+}$.
The $(i+1)$-th derivative of the multivariate log-gamma function $\log \Gamma_r \left( \mu \right)$ for $\mu > \left( r - 1 \right) d / 2$ is called the multivariate polygamma function $\psi^{(i)}_r \left( \mu  \right)$ of order $i$.
Particularly, $\psi_r \left( \mu \right) := \psi_r^{(0)} \left( \mu \right) = \frac{d}{d\mu} \log \Gamma_r \left( \mu \right)$ is called a multivariate digamma function.
$(-1)^{i+1} \psi_r^{(i)} (\mu)$ is positive and strictly decreasing to zero if $i > 0$.
We have
\begin{align}
  \psi_r^{(i)} \left( \mu \right) = \sum_{k=1}^r \psi^{(i)} \left( \mu - \left( k - 1 \right) \frac{d}{2} \right)
  \label{formula:polygamma}
  \,,
\end{align}
where $\psi^{(i)} \left( \mu \right) := \psi_1^{(i)} \left( \mu \right)$ is the usual univariate polygamma function for $\mu > 0$.

The Wishart distribution of rank $r$ is a probability distribution on the space $E_r^{+}$.
As opposed to using the common parameterization of the Wishart distribution $W^*_r \left( M, \Sigma \right)$ with its degree of freedom $M > r - 1$ and expectation $M \Sigma$, we parameterize the family of Wishart distributions $W_r \left( \mu, \xi \right)$ using $\mu > \left( r - 1 \right) d / 2$ and $\xi \in \Xi := E_r^+$ based on the works \cite{letac1989characterization, casalis1996lukacs}
(i.e., the parameter $\mu$ is defined as $M / d$, and the parameter $\xi$ is defined as $\Sigma^{-1} / d$).
We use $\langle x \mid y \rangle := \mathrm{tr} (xy)$ to denote the usual inner product of symmetric or Hermitian matrices $x$ and $y$.
The probability distribution $p^{\mu} \left( x \mid \xi \right) \,dx$ of the Wishart distribution $W_r \left( \mu, \xi \right)$ is
\begin{align}
  p^{\mu} \left( x \mid \xi \right) \,dx
  &= \frac{ \left| x \right|^{ \mu - \frac{n}{r} } }{ \Gamma_r (\mu)} | \xi |^{\mu} \exp \left( - \langle \xi \mid x \rangle \right) \,dx
  \,,
  \label{definition:wishart_distribution}
\end{align}
where $x \in \mathcal{X} := E_r^{+}$, $\mu > \left( r - 1 \right) d / 2$ and $\xi \in \Xi = E_r^{+}$.
We write $X \sim W_r \left( \mu, \xi \right)$ if a random variable $X$ that takes its value on $E^{+}_r$ is distributed according to the Wishart distribution $W_r \left( \mu, \xi \right)$.
If $\mu$ is a fixed parameter, (\ref{definition:wishart_distribution}) shows that the family $\{ W_r \left( \mu, \xi \right) \}_{\xi \in \Xi}$ of Wishart distributions is a natural exponential family with a natural parameter $\xi$ (see \cite{letac1989characterization}).
If $X \sim W_r \left( \mu, \xi \right)$, then $E \left[ X \right] = \mu \xi^{-1}$ and $E \left[ \log \left| X \right| \right] = \psi_r \left( \mu \right) - \log \left| \xi \right|$.

Let us consider the problem of constructing a predictive distribution $\delta \left( y \mid x \right) \,dy$ of a random variable $Y$ that takes its value in the sample space $\mathcal{Y} := E_r^{+}$ by observing a random variable $X$ that takes its value on the sample space $\mathcal{X} := E_r^{+}$.
We assume that the random variables $X$ and $Y$ are distributed independently according to $W_r \left( \mu, \xi \right)$ and $W_r \left( \nu, \xi \right)$, respectively,
where the values of the parameters $\mu > \left( r - 1 \right) d / 2$ and $\nu > \left( r - 1 \right) d / 2$ are known in advance.
This situation naturally arises when we predict the sample variance-covariance matrix of the multivariate normal distribution.
In this situation, parameters $\mu$ and $\nu$ correspond to the size $M$ and $N$ of the current and future observations, respectively.

We regard each predictive distribution $\delta \left( y \mid x \right) \,dy$ as a non-randomized statistical decision function.
The decision space is defined as all probability distributions on $\mathcal{Y}$, and it is not a finite-dimensional space.
We aim to construct a predictive distribution that is not dissimilar from the true distribution.
Divergences were introduced to measure the dissimilarity between two probability distributions (see \cite{renyi1961measures, amari2009alpha, nielsen2013chi}).
For example, for probability distributions $P = p \left( y \right) dy$ and $Q = q \left( y \right) dy$,
the Kullback--Leibler divergence is defined as
\begin{align*}
  \KL{P}{Q} := \int_{\mathcal{Y}} \log \frac{ dP }{ dQ } dP
  = \int_{\mathcal{Y}} p \left( y \right) \log \frac{ p \left( y \right) }{ q \left( y \right) } dy
  = E_Y \left[ \log \frac{ p \left( Y \right) }{ q \left( Y \right) } \right]
  \,,
\end{align*}
where $E_Y$ denotes the expectation over the random variable $Y \sim P$.
In this study, we define the loss function $L$ as
\begin{align*}
  L \left( \xi, Q \right) := \KL{ p^{\nu} \left( y \mid \xi \right) dy }{ q \left( y \right) dy }
\end{align*}
for a probability distribution $Q = q \left( y \right) dy$,
where $p^{\nu} \left( y \mid \xi \right) dy$ is the true distribution for the random variable $Y \sim W_r \left( \nu, \xi \right)$.
A predictive distribution $\delta$ is a mapping that associates a probability distribution $\delta \left( y \mid x \right) dy$ on the space $\mathcal{Y}$ for each element $x$ in space $\mathcal{X}$.
Therefore, a predictive distribution $\delta$ corresponds to a non-randomized statistical decision function in statistical decision theory, where each decision corresponds to a probability distribution on the sample space $\mathcal{Y}$ (see Section 2 in \cite{robert2007bayesian}).

Thus, we propose to construct a predictive distribution $\delta$ for a random variable $Y \sim W_r \left( \nu, \xi \right)$ such that its risk $R^{\mu, \nu} \left( \xi , \delta \right)$ is small for all $\xi \in \Xi$.
The risk of a predictive distribution $\delta$ is defined as the average
\begin{align}
  R^{\mu, \nu} \left( \xi, \delta \right) := \int_{\mathcal{X}} L \left( \xi, \delta \left( y \mid x \right) dy \right) p^{\mu} \left( x \mid \xi \right) dx
  =
  E_{X, Y}
  \left[
    \log
    \frac{
      p^{\nu} \left( Y \;\middle|\; \xi \right)
    }{
      \delta \left( Y \;\middle|\; X \right)
    }
  \right]
  \label{definition:R}
\end{align}
of the loss $L \left( \xi, \delta \left( y \mid x \right) dy \right)$ over the distribution $X \sim W_r \left( \mu, \xi \right)$.
The risk $R^{\mu, \nu} \left( \xi, \delta \right)$ is non-negative, and it is zero if and only if $\delta \left( y \mid x \right) dy$ = $p^{\nu} \left( y \mid \xi \right) dy$ almost everywhere.

A predictive distribution $\delta$ is minimax if
$\sup_{\xi \in \Xi} R^{\mu, \nu} \left( \xi, \delta \right) \leq \sup_{\xi \in \Xi} R^{\mu, \nu} \left( \xi, \delta' \right)$
for any predictive distribution $\delta'$.
A predictive distribution $\delta$ dominates a predictive distribution $\delta'$
if $R^{\mu, \nu} \left( \xi, \delta \right) \leq  R^{\mu, \nu} \left( \xi ,\delta' \right)$ for any $\xi$ and $R^{\mu, \nu} \left( \xi, \delta \right) <  R^{\mu, \nu} \left( \xi ,\delta' \right)$ for some $\xi$.
Moreover, a predictive distribution $\delta$ of $Y$ is admissible if no predictive distribution $\delta'$ of $Y$ dominates the predictive distribution $\delta$.

The integrated risk (average risk) of a predictive distribution $\delta$ with respect to a possibly improper prior distribution $\pi \left( \xi \right) d \xi$ on the parameter space $\Xi$ is defined as the average
\begin{align*}
  R^{*\mu, \nu} \left( \pi, \delta \right)
  :=
  \int_{\Xi} R^{\mu, \nu} \left( \xi, \delta \right) \pi \left( \xi \right) d\xi
\end{align*}
of the risk $R^{\mu, \nu} \left( \xi, \delta \right)$ over the distribution $\pi \left( \xi \right) d \xi$.

The Bayesian predictive distribution $\delta^{\mu, \nu}_{\pi}$ based on a possibly improper prior distribution $\pi \left( \xi \right) d \xi$ on the parameter space $\Xi$ is defined as the weighted average
\begin{align*}
  \delta^{\mu, \nu}_{\pi} \left( y \mid x \right) dy
  :=
  \left(
    \int_{\Xi} p^{\nu} \left( y \mid \xi \right) \pi^{\mu} \left( \xi \mid x \right) d\xi
  \right)
  dy
\end{align*}
of the Wishart distributions $W \left( \nu, \xi \right)$ over the posterior distribution $\pi^{\mu} \left( \xi \mid x \right) d\xi$ of the prior distribution $\pi \left( \xi \right) d \xi$, given a sample $x$ of the random variable $X \sim W \left( \mu, \xi \right)$.
A prior distribution $\pi \left( \xi \right) \,d\xi$ is minimax if the Bayesian predictive distribution based on this prior distribution is minimax.
A prior distribution $\pi \left( \xi \right) \,d\xi$ dominates another prior distribution $\pi' \left( \xi \right) \,d\xi$ if the Bayesian predictive distribution based on the prior distribution $\pi \left( \xi \right) \,d\xi$ dominates the Bayesian predictive distribution based on the prior distribution $\pi' \left( \xi \right) \,d\xi$.
A prior distribution $\pi \left( \xi \right) \,d\xi$ is admissible if the Bayesian predictive distribution based on this prior distribution is admissible.

For any predictive distribution $\delta$, we have
\begin{align*}
  R^{*\mu, \nu} \left( \pi, \delta \right) - R^{*\mu, \nu} \left( \pi, \delta^{\mu, \nu}_{\pi} \right)
  =
  \int_{\mathcal{X}} \KL{ \delta^{\mu, \nu}_{\pi} \left( y \mid x \right) dy }{ \delta \left( y \mid x \right) dy } m^{\mu}_{\pi} \left( x \right) dx
  \geq
  0
  \,,
\end{align*}
where $m_{\pi}^{\mu} \left( x \right) := \int_{\Xi} p^{\mu} \left( x \mid \xi \right) \pi \left( \xi \right) d\xi$ is the marginal distribution of the random variable $X \sim W_r \left( \mu, \xi \right)$ with respect to the prior distribution $\pi \left( \xi \right) d\xi$.
Therefore, if $R^{*\mu, \nu} \left( \pi, \delta^{\mu, \nu}_{\pi} \right) < +\infty$, the Bayesian predictive distribution $\delta^{\mu, \nu}_{\pi}$ is the Bayesian procedure with respect to the prior distribution $\pi \left( \xi \right) d\xi$, and the integrated risk $R^{*\mu, \nu} \left( \pi, \delta^{\mu, \nu}_{\pi} \right)$ corresponds to the Bayes risk $R^{*\mu, \nu} \left( \pi \right) := \inf_{\delta} R^{*\mu, \nu} \left( \pi, \delta \right)$ (see \cite{aitchison1975goodness}).

\section{Conditional reducibility of probability distributions}
\label{section:conditional_reducibility}

In this section, we state the definition of conditional reducibility for families of probability distributions.
Natural exponential families with quadratic variance functions were investigated in \cite{morris1982natural, morris1983natural}.
Among them, (i) natural exponential families with homogeneous variance functions (see \cite{casalis1991familles}), such as the Wishart distribution, and (ii) natural exponential families with simple quadratic variance functions (see \cite{casalis19962}), such as the multinomial distribution and the negative multinomial distribution, are of practical importance.
These families of probability distributions exhibit a property called conditional reducibility, which was discovered in \cite{consonni2001conditionally, consonni2003enriched}.
In this section, we explain that the conditional reducibility enables us to decompose the risk of a Bayesian predictive distribution into the parts that are related to the corresponding groups of parameters.

A family $\{ p \left( x \mid \xi \right) dx \}_{\xi \in \Xi}$ of probability distributions on a sample space $\mathcal{X}$ is called conditionally $h$-reducible
if there exists an isomorphism
\begin{align*}
  \mathcal{X} \xrightarrow{\cong} \mathcal{X}^{(1)} \times \cdots \times \mathcal{X}^{(h)}
  \;,\;
  x \mapsto \left( x^{(1)}, \ldots, x^{(h)} \right)
\end{align*}
of sample spaces and an isomorphism
\begin{align*}
  \Xi \xrightarrow{\cong} \Phi = \Phi^{(1)} \times \cdots \times \Phi^{(h)}
  \;,\;
  \xi \mapsto \phi = \left( \phi^{(1)}, \ldots, \phi^{(h)} \right)
\end{align*}
of parameter spaces such that
\begin{align*}
  p \left( x \;\middle|\; \xi \right) dx
  =
  \prod_{i=1}^h
  p^{(i)} \left( x^{(i)} \;\middle|\; x_{(i-1)}, \phi^{(i)} \right) dx^{(i)}
  \,,\;
  d\xi = \prod_{i=1}^h d\phi^{(i)}
  \,,
\end{align*}
for $i \in \{ 1, \ldots, h \}$,
where the superscript $(i)$ refers to the $i$-th part of the partition and the subscript $(i)$ refers to the parts up to the $i$-th part of the partition (i.e., $x_{(i)} := \left( x^{(1)}, \ldots, x^{(i)} \right)$ and $\phi_{(i)} := \left( \phi^{(1)}, \ldots, \phi^{(i)} \right)$).

We consider the problem of constructing a predictive distribution of a random variable $Y$ by observing a random variable $X$.
Suppose that the random variable $X$ is distributed according to the distribution $p^{\mu} \left( x \;\middle|\; \xi \right) dx$, which belongs to a family $\{ p^{\mu} \left( x \;\middle|\; \xi \right) dx \}_{\xi \in \Xi}$ of probability distributions, and that the random variable $Y$ is distributed according to the distribution $p^{\nu} \left( y \;\middle|\; \xi \right) dy$, which belongs to a family $\{ p^{\nu} \left( y \;\middle|\; \xi \right) dy \}_{\xi \in \Xi}$ of probability distributions.
$\mu$ and $\nu$ are used only for distinguishing the two families.
We use the risk $R^{\mu, \nu} \left( \xi, \delta \right) := E_{X, Y} \left[ \log \frac{ p^{\nu} \left( Y \;\middle|\; \xi \right) }{ \delta \left( Y \;\middle|\; X \right) } \right]$ for evaluating a predictive distribution $\delta \left( y \;\middle|\; x \right) dy$; see (\ref{definition:R}) in Section \ref{section:bayesian_prediction_for_wishart_distribution}.
Suppose that two families are conditionally $h$-reducible with respect to the isomorphism $\Xi \cong \Phi = \Phi^{(1)} \times \cdots \times \Phi^{(h)}$ of parameter spaces.

First, we consider the estimative distribution (i.e., plugin distribution) for the prediction of the random variable $Y$.
Let $\bar{\phi} = \bar{\phi}^{\mu} \left( X \right)$ be an estimator of the true parameter $\phi$.
We use the estimative distribution, which is expressed as
\begin{align*}
  \delta^{\mu, \nu}_{\bar{\phi}} \left( y \;\middle|\; x \right) dy
  :=
  p^{\nu} \left( y \;\middle|\; \bar{\phi}^{\mu} \left( x \right) \right) dy
  \,,
\end{align*}
as a predictive distribution.
Suppose that the parameter $\phi^{(i)}$ is estimated only from the observation $x_{(i)}$ (i.e., $\bar{\phi}^{(i)} := \bar{\phi}^{(i) \mu} \left( x_{(i)} \right)$).
The family $\{ p^{\nu} \left( y \;\middle|\; \xi \right) dy \}_{\xi \in \Xi}$ is conditionally $h$-reducible.
Therefore, the predictive distribution $\delta^{\mu, \nu}_{\bar{\phi}}$ decomposes into the product
\begin{align*}
  \delta^{\mu, \nu}_{\bar{\phi}} \left( y \;\middle|\; x \right) dy
  =
  \prod_{i=1}^h
  \delta^{(i) \mu, \nu}_{\bar{\phi}^{(i)}} \left( y^{(i)} \;\middle|\; y_{(i-1)},  x_{(i)} \right) dy^{(i)}
\end{align*}
of the conditional predictive distributions
\begin{align*}
  \delta^{(i) \mu, \nu}_{\bar{\phi}^{(i)}} \left( y^{(i)} \;\middle|\; y_{(i-1)}, x_{(i)} \right) dy^{(i)}
  :=
  p^{(i) \nu} \left( y^{(i)} \;\middle|\; y_{(i-1)}, \bar{\phi}^{(i) \mu} \left( x_{(i)} \right) \right) dy^{(i)}
  \,.
\end{align*}
The risk $R^{\mu, \nu} \left( \xi, \delta^{\mu, \nu}_{\bar{\phi}} \right)$ of the predictive distribution $\delta^{\mu, \nu}_{\bar{\phi}}$ is decomposed into the sum
\begin{align*}
  R^{\mu, \nu} \left( \xi,  \delta^{\mu, \nu}_{\bar{\phi}} \right)
  =
  \sum_{i=1}^h
  R^{(i) \mu, \nu} \left( \phi_{(i)}, \delta^{(i) \mu, \nu}_{\bar{\phi}^{(i)}} \right)
\end{align*}
of the risks
\begin{align*}
  R^{(i) \mu, \nu} \left( \phi_{(i)}, \delta^{(i) \mu, \nu}_{\bar{\phi}^{(i)}} \right)
  :=
  E_{X_{(i)}, Y_{(i)}}
  \left[
    \log
    \frac{
      p^{(i) \nu} \left( Y^{(i)} \;\middle|\; Y_{(i-1)}, \phi^{(i)} \right)
    }{
      \delta^{(i) \mu, \nu}_{\bar{\phi}^{(i)}} \left( Y^{(i)} \;\middle|\; Y_{(i-1)}, X_{(i)} \right)
    }
  \right]
\end{align*}
of the conditional predictive distributions $\delta^{(i) \mu, \nu}_{\bar{\phi}^{(i)}}$.

Then, we consider the Bayesian predictive distribution for the prediction of the random variable $Y$.
Let $\pi \left( \phi \right) d\phi$ be a possibly improper prior distribution on the parameter space $\Phi$.
Suppose the prior distribution $\pi \left( \phi \right) d\phi$ factorizes as the product $\pi \left( \phi \right) d\phi = \prod_{i=1}^h \pi^{(i)} \left( \phi^{(i)} \right) d\phi^{(i)}$ of prior distributions $\pi^{(i)} \left( \phi^{(i)} \right) d\phi^{(i)}$ on the parameter spaces $\Phi^{(i)}$.
The family $\{ p^{\mu} \left( x \;\middle|\; \xi \right) dx \}_{\xi \in \Xi}$ is conditionally $h$-reducible.
Therefore, the posterior distribution $\pi^{\mu} \left( \phi \;\middle|\; x \right)$ given an observation $x$ factorizes as the product $\pi^{\mu} \left( \phi \;\middle|\; x \right) d\phi = \prod_{i=1}^h \pi^{(i) \mu} \left( \phi^{(i)} \;\middle|\; x_{(i)} \right) d\phi^{(i)}$ of the posterior distributions $\pi^{(i) \mu} \left( \phi^{(i)} \;\middle|\; x_{(i)} \right) d\phi^{(i)}$ on the parameter spaces $\Phi^{(i)}$.
The Bayesian predictive distribution $\delta^{\mu, \nu}_{\pi}$ based on the prior distribution $\pi \left( \phi \right) d\phi$ is decomposed into the product
\begin{align*}
  \delta^{\mu, \nu}_{\pi} \left( y \;\middle|\; x \right) dy
  =
  \prod_{i=1}^h
  \delta^{(i) \mu, \nu}_{\pi^{(i)}} \left( y^{(i)} \;\middle|\; y_{(i-1)}, x_{(i)} \right) dy^{(i)}
\end{align*}
of the conditional Bayesian predictive distributions
\begin{align*}
  \delta^{(i) \mu, \nu}_{\pi^{(i)}} \left( y^{(i)} \;\middle|\; y_{(i-1)}, x_{(i)} \right) dy^{(i)}
  :=
  \left(
    \int_{\Phi^{(i)}}
    p^{(i) \nu} \left( y^{(i)} \;\middle|\; y_{(i-1)}, \phi^{(i)} \right)
    \pi^{(i) \mu} \left( \phi^{(i)} \;\middle|\; x_{(i)} \right)
    d\phi^{(i)}
  \right)
  dy^{(i)}
  \,.
\end{align*}
The risk $R^{\mu, \nu} \left( \xi, \delta^{\mu, \nu}_{\pi} \right)$ of the Bayesian predictive distribution $\delta^{\mu, \nu}_{\pi}$ is decomposed into the sum
\begin{align*}
  R^{\mu, \nu} \left( \xi,  \delta^{\mu, \nu}_{\pi} \right)
  =
  \sum_{i=1}^h
  R^{(i) \mu, \nu} \left( \phi_{(i)}, \delta^{(i) \mu, \nu}_{\pi^{(i)}} \right)
\end{align*}
of the risks
\begin{align}
  R^{(i) \mu, \nu} \left( \phi_{(i)}, \delta^{(i) \mu, \nu}_{\pi^{(i)}} \right)
  :=
  E_{X_{(i)}, Y_{(i)}}
  \left[
    \log
    \frac{
      p^{(i) \nu} \left( Y^{(i)} \;\middle|\; Y_{(i-1)}, \phi^{(i)} \right)
    }{
      \delta^{(i) \mu, \nu}_{\pi^{(i)}} \left( Y^{(i)} \;\middle|\; Y_{(i-1)}, X_{(i)} \right)
    }
  \right]
  \label{definition:R_i}
\end{align}
of the conditional Bayesian predictive distributions $\delta^{(i) \mu, \nu}_{\pi^{(i)}}$.

\section{Conditional reducibility of Wishart distributions}
\label{section:conditional_reducibility_wishart}

In this section, we explain the conditional reducibility of the family $\{ W_r \left( \mu, \xi \right) \}_{\xi \in \Xi}$ of Wishart distributions.
We propose the family of prior distributions and explain three important prior distributions in the family.

We consider the case $h = 2$.
We assume that a random variable $X$ is distributed according to the Wishart distribution $W_r \left( \mu, \xi \right)$ and consider a partition $\mathbf{k} := \left( k, r - k \right)$ of rank $r$.
We shall consider the parameterization $\phi = \left( \phi^{(1)}, \phi^{(2)} \right)$ such that the parameters $\phi^{(1)}$ and $\phi^{(2)}$ govern the distribution of the principal $k \times k$ part $X_1$ of the random variable $X$ and the rest of $X$ given $X_1$, respectively.
We write the parameter $\xi \in E_{r}^+$ as
$
  \begin{bmatrix}
    \xi_{1} & \xi_{1/2} \\
    { \xi_{1/2} }^* & \xi_0
  \end{bmatrix}
$
and the random variable $X$ as
$
  \begin{bmatrix}
    X_{1} & X_{1/2} \\
    { X_{1/2} }^* & X_0
  \end{bmatrix}
$
with respect to the partition $\mathbf{k}$.
The random variable $X_1$ is distributed according to the Wishart distribution $W_{k} \left( \mu, \zeta_1 \right)$, where the parameter $\zeta_1 \in E_{k}^+$ is defined as the Schur complement $\xi_1 - \xi_{1/2} \left( \xi_0 \right)^{-1} {\xi_{1/2}}^* \in E_{k}^+$ of the parameter $\xi_0 \in E_{r-k}^+$.
Note that we have $E \left[ X_1 \right] = \mu \left( \zeta_1 \right)^{-1}$, whereas $E \left[ X \right] = \mu {\xi}^{-1}$.
In the context of the reference approach, $\phi^{(1)} := \left( \zeta_1 \right)$ corresponds to the parameter of interest, and $\phi^{(2)} := \left( \xi_{1/2}, \xi_0 \right)$ corresponds to the nuisance parameter.
Note that the determinant of the Jacobian of the transformation $\xi \mapsto \phi := \left( \phi^{(1)}, \phi^{(2)} \right)$ is equal to $1$ (i.e., $d\xi = d\phi$).
In addition, we have $\left| \xi \right| = \left| \zeta_1 \right| \left| \xi_0 \right|$.
We observe that the distribution of the random variable $X_1$ only depends on the parameter $\phi^{(1)}$ and that the conditional distribution $\left( X_{1/2}, X_0 \right) \mid X_1$ of the random variable $\left( X_{1/2}, X_0 \right)$ given an observation $X_1$ only depends on the parameter $\phi^{(2)}$, exhibiting the conditional $2$-reducibility of the family of Wishart distributions with respect to the partition $\mathbf{k}$.

We consider the general case $h \geq 2$.
Suppose that a random variable $X$ is distributed according to the Wishart distribution $W_r \left( \mu, \xi \right)$ and consider a partition $\mathbf{k} = \left( k^{(1)}, \ldots, k^{(h)} \right)$ of rank $r$ into the $h$ parts $k^{(i)}$ (i.e., $r = \sum_{i=1}^h k^{(i)}$).
We set
\begin{align*}
  r_{(i)} := k^{(1)} + \cdots + k^{(i)}
\end{align*}
for $i \in \{ 1, \ldots, h \}$.
We interpret $r_{(-1)}$ as $0$ for convenience.
For each $i \in \{ 1, \ldots, h \}$, we consider the principal $r_{(i)} \times r_{(i)}$ part $X_{(i)}$ of the random variable $X$.
We shall consider the parameterization $\phi = \left( \phi^{(1)}, \ldots, \phi^{(h)} \right)$ so that for each $i \in \{ 1, \ldots, h \}$, the set of the parameters $\left( \phi^{(1)}, \ldots, \phi^{(i)} \right)$ governs the distribution of $X_{(i)}$, and the rest of the parameters $\left( \phi^{(i+1)}, \ldots, \phi^{(h)} \right)$ governs the distribution of the rest of $X$ given $X_{(i)}$.
The random variable $X_{(i)}$ is distributed according to the Wishart distribution $W_{r_{(i)}} \left( \mu, \zeta_{(i)} \right)$, where the parameter $\zeta_{(i)} \in E_{r_{(i)}}^+$ is defined as the Schur complement of the lower-right $\left( r - r_{(i)} \right) \times \left( r - r_{(i)} \right)$ part of the parameter $\xi \in E_r^+$.
Note that $X_{(h)}$ is interpreted as $X$ and that $\zeta_{(h)}$ is interpreted as $\xi$.
For $i \in \{ 2, \ldots, h \}$, 
we define $\xi_{(i-1)} \in E_{r_{(i-1)}}^+$ as the principal $r_{(i-1)} \times r_{(i-1)}$ part of the parameter $\zeta_{(i)} \in E_{r_{(i)}}^+$.
We write
\begin{align*}
  \zeta_{(i)}
  =
  \begin{bmatrix}
    \xi_{(i-1)} & \xi^{(i)}_{1/2} \\
    { \xi^{(i)}_{1/2} }^* & \xi^{(i)}_0
  \end{bmatrix}
  \in E_{r_{(i)}}^+
\end{align*}
with respect to the partition $\left( r_{(i-1)}, k^{(i)} \right)$ of rank $r_{(i)}$.
We set for $i \in \{ 2, \ldots, h \}$
\begin{align*}
  \phi_{(i)} := \left( \zeta_{(i-1)}, \xi^{(i)}_{1/2}, \xi^{(i)}_0 \right)
  \,.
\end{align*}
For convenience, we define $\phi_{(1)}$ as $\zeta_{(1)}$.
We have $\zeta_{(i)} \cong \phi_{(i)}$ because $\zeta_{(i-1)} \in E_{r_{(i-1)}}^{+}$ is the Shur complement
\begin{align*}
  \zeta_{(i-1)} = \xi_{(i-1)} - \xi^{(i)}_{1/2} \left( \xi^{(i)}_0 \right)^{-1} {\xi^{(i)}_{1/2}}^* \in E_{r_{(i-1)}}^+
\end{align*}
of $\xi^{(i)}_0 \in E_{k^{(i)}}^{+}$ in $\zeta_{(i)} \in E_{r_{(i)}}^{+}$.
We construct the parameter $\phi^{(i)}$ as
for $i \in \{ 2, \ldots, h \}$
\begin{align*}
  \phi^{(i)} := \left( \xi^{(i)}_{1/2}, \xi^{(i)}_0 \right)
  \,.
\end{align*}
For convenience, $\phi^{(1)}$ is defined as $\zeta_{(1)}$.
Note that $\phi_{(i)} \cong \left( \phi_{(i-1)}, \phi^{(i)} \right) \cong \left( \phi^{(1)}, \ldots, \phi^{(i)} \right)$ for $i \in \{ 2, \ldots, h \}$.
In particular, we have $\phi_{(h)} \cong \phi := \left( \phi^{(1)}, \ldots, \phi^{(h)} \right)$.
Moreover, note that we have $\left| \xi \right| = \prod_{i=1}^h \left| \xi^{(i)}_0 \right|$,
where $\xi^{(1)}_0$ is interpreted as $\zeta_{(1)}$.
We identify the parameters $\xi$ and $\phi$ with this one-to-one correspondence $\Xi \cong \Phi$ and denote the corresponding Wishart distribution by $W_r \left( \mu, \phi \right)$.
We write the principal $r_{(i)} \times r_{(i)}$ part $x_{(i)}$ of $x \in \mathcal{X}$ as
$
  x_{(i)}
  =
  \begin{bmatrix}
    x_{(i-1)} & x^{(i)}_{1/2} \\
    { x^{(i)}_{1/2} }^* & x^{(i)}_0
  \end{bmatrix}
  \in 
  E_{r_{(i)}}^+
$
with respect to the partition $\left( r_{(i-1)}, k^{(i)} \right)$ of rank $r_{(i)}$ and set $x^{(i)} := \left( x^{(i)}_{1/2}, x^{(i)}_{0} \right)$ for $i \in \{ 2, \ldots, h \}$.
We have an isomorphism $\mathcal{X} \xrightarrow{\cong} \mathcal{X}^{(1)} \times \cdots \times \mathcal{X}^{(h)}$, $x \mapsto \left( x^{(1)}, \ldots, x^{(h)} \right)$ of sample spaces.
The conditional distribution $\left( X^{(i)}_{1/2}, X^{(i)}_0 \right) \mid X_{(i-1)}$ of the random variable $X^{(i)} := \left( X^{(i)}_{1/2}, X^{(i)}_0 \right)$ given an observation $X_{(i-1)}$ only depends on the parameter $\phi^{(i)} \in \Phi^{(i)}$ for $i \in \{ 2, \ldots, h \}$.
The family of Wishart distributions is conditionally $h$-reducible (see Theorem 1 in \cite{consonni2003enriched}).

For $i \in \{ 1, \ldots, h \}$, we define the integer
\begin{align*}
  m^{(i)} := k^{(i)} + \frac{k^{(i)} \left( k^{(i)}  - 1 \right)}{2} d
\end{align*}
as the dimension of the symmetric cone $E^+_{k^{(i)}}$ of rank $k^{(i)}$.
The integer
\begin{align*}
  n_{(i)} := r_{(i)} + \frac{ r_{(i)} \left( r_{(i)} - 1  \right) }{ 2 } d
\end{align*}
is defined as the dimension of the symmetric cone $E^+_{r_{(i)}}$ of rank $r_{(i)}$.
Note that the integer $n_{(i)}$ corresponds to the dimension of the parameter space $\Phi_{(i)}$ for the parameter $\phi_{(i)} \cong \zeta_{(i)}$.
We interpret $m^{(0)}$ and $n_{(0)}$ as zeros for convenience.

We consider the family $\{ \pi_{\mathbf{t}} \left( \phi \right) d\phi \}_{\mathbf{t} \in \mathbb{R}^h}$ of prior distributions, defined as
\begin{align}
  \pi_{\mathbf{t}} \left( \xi \right) d\xi
  =
  \prod_{i = 1}^h \pi^{(i)}_{t^{(i)}} \left( \phi^{(i)} \right) d\phi^{(i)}
  \label{definition:relatively_invariant_priors_wishart}
\end{align}
for $\mathbf{t} = \left( t^{(1)}, \ldots, t^{(h)} \right) \in \mathbb{R}^h$,
where each distribution $\pi^{(i)}_{t^{(i)}} \left( \phi^{(i)} \right) d\phi^{(i)}$ on the parameter space $\Phi^{(i)}$ is defined as
\begin{align}
  &
  \pi^{(i)}_{t^{(i)}} \left( \phi^{(i)} \right) d\phi^{(i)}
  :=
  \left| \xi_0^{(i)} \right|^{t^{(i)}} d\xi^{(i)}_{1/2} d\xi^{(i)}_0
  \label{definition:relatively_invariant_priors_wishart_i}
  \,.
\end{align}
We interpret $d\xi^{(1)}_{1/2} d\xi^{(1)}_0$ as $d\zeta_{(1)}$.
The family $\{ \pi_{\mathbf{t}} \left( \phi \right) d\phi \}_{\mathbf{t} \in \mathbb{R}^h}$ of prior distributions (\ref{definition:relatively_invariant_priors_wishart}) is a subfamily of the family of enriched standard conjugate priors, which was introduced in \cite{consonni2001conditionally, consonni2003enriched} (see Appendix \ref{section:ESCPD}).
The posterior distribution of an enriched standard conjugate prior distribution is an enriched standard conjugate prior distribution and is calculated by the hyperparameter update of the family of enriched standard conjugate prior distributions.

There are two familiar prior distributions in the family: the Jeffreys prior distribution and the reference prior distribution.
The Fisher information matrix $H \left( \phi \right)$ of the family $\{ W_r \left( \mu, \phi \right) \}_{\phi \in \Phi}$ of Wishart distributions is of the form $\diag \left( H^{(1)} \left( \phi_{(1)} \right), \ldots, H^{(h)} \left( \phi_{(h)} \right) \right)$, and the determinant of the matrix $H^{(i)} \bigl( \phi_{(i)} \bigr)$ is as follows:
for $i \in \{ 1, \ldots, h \}$
\begin{align*}
  \left| H^{(i)} \bigl( \phi_{(i)} \bigr) \right|
  =
  a^{(i)} \bigl( \phi^{(i)} \bigr)
  \times
  b^{(i)} \bigl( \phi_{(i-1)} \bigr)
  \,,
\end{align*}
where
\begin{align*}
  a^{(i)} \bigl( \phi^{(i)} \bigr)
  :=
  \left| \xi^{(i)}_0 \right|^{ - 2 \left( \frac{r_{(i)} - 1}{2} d + 1 \right) }
  \,,\;
  b^{(i)} \bigl( \phi_{(i-1)} \bigr)
  :=
  \left| \zeta_{i-1} \right|^{ - 2 \left( \frac{k^{(i)}}{2} d  \right) }
  \,.
\end{align*}
The Jeffreys prior distribution $\pi_J \left( \phi \right) d\phi$ is calculated as
\begin{align}
  \pi_J \left( \phi \right) d\phi
  =
  \left| H \left( \phi \right) \right|^{\frac{1}{2}} d\phi
  =
  \prod_{i=1}^{h} \left| \xi^{(i)}_0 \right|^{ - \left( \frac{r-1}{2} d + 1 \right) } d\xi^{(i)}_{1/2} d\xi^{(i)}_0
  \,,
\end{align}
which belongs to the family $\{ \pi_{\mathbf{t}} \left( \phi \right) d\phi \}_{\mathbf{t} \in \mathbb{R}^h}$ with $\mathbf{t} = \mathbf{t}_J$, where
for $i \in \{ 1, \ldots, h \}$
\begin{align*}
  t^{(i)}_J :=  - \frac{n}{r} = - \left( \frac{r-1}{2} d + 1 \right)
  \in \mathbb{R}
  \,.
\end{align*}
The reference prior distribution $\pi_C \left( \phi \right) d\phi$ is calculated as
\begin{align}
  \pi_C \left( \phi \right) d\phi
  =
  \prod_{i=1}^{h} \left| a^{(i)} \bigl( \phi^{(i)} \bigr) \right|^{\frac{1}{2}} d\phi^{(i)}
  =
  \prod_{i=1}^{h} \left| \xi^{(i)}_0 \right|^{ - \left( \frac{r_{(i)}-1}{2} d + 1 \right) } d\xi^{(i)}_{1/2} d\xi^{(i)}_0
  \label{definition:reference_prior}
  \,,
\end{align}
which also belongs to the family $\{ \pi_{\mathbf{t}} \left( \phi \right) d\phi \}_{\mathbf{t} \in \mathbb{R}^h}$ with $\mathbf{t} = \mathbf{t}_C$, where
for $i \in \{ 1, \ldots, h \}$ 
\begin{align*}
  t^{(i)}_C :=  - \frac{n_{(i)}}{r_{(i)}} = - \left( \frac{r_{(i)}-1}{2} d + 1 \right)
  \in \mathbb{R}
\end{align*}
(see Section 2.3 in \cite{consonni2003enriched} for the interpretation and application of the reference prior distribution $\pi_C \left( \phi \right) d\phi$).

In this study, we consider another prior distribution
\begin{align}
  \pi_R \left( \phi \right) d\phi
  :=
  \prod_{i=1}^{h} \left| \xi^{(i)}_0 \right|^{ - \left( \frac{2 r_{(i)} - k^{(i)} - 1}{2} d + 1 \right) } d\xi^{(i)}_{1/2} d\xi^{(i)}_0
  \label{definition:right_invariant_prior}
  \,,
\end{align}
which belongs to the family $\{ \pi_{\mathbf{t}} \left( \phi \right) d\phi \}_{\mathbf{t} \in \mathbb{R}^h}$ with $\mathbf{t} = \mathbf{t}_R$, where
for $i \in \{ 1, \ldots, h \}$
\begin{align*}
  t^{(i)}_R
  :=
  - \left( \frac{n_{(i)}}{r_{(i)}} + \left( \frac{n_{(i)}}{r_{(i)}} - \frac{m^{(i)}}{k^{(i)}} \right) \right)
  =
  - \left( \frac{2 r_{(i)} - k^{(i)} - 1}{2} d + 1 \right)
  \in \mathbb{R}
  \,.
\end{align*}
Note that $t^{(1)}_R = t^{(1)}_C = - \left( \frac{k^{(1)} - 1}{2}d + 1\right) \in \mathbb{R}$.
This study aims to compare the reference prior distribution (\ref{definition:reference_prior})
and our prior distribution (\ref{definition:right_invariant_prior}).
The importance of our prior distribution $\pi_R \left( \phi \right) d\phi$ is discussed in later sections.

\section{Asymptotic behavior of risks of Bayesian predictive distributions}
\label{section:main_asymptotic}

In this section, we investigate the asymptotic property of the risk of Bayesian predictive distributions based on prior distributions (\ref{definition:relatively_invariant_priors_wishart}).
We show that our prior distribution (\ref{definition:right_invariant_prior}) asymptotically dominates the reference prior distribution (\ref{definition:reference_prior}).

In the next proposition, we show that the family $\{ \pi_{\mathbf{t}} \left( \phi \right) d\phi \}_{\mathbf{t} \in \mathbb{R}^h}$ of prior distributions (\ref{definition:relatively_invariant_priors_wishart}) is related to the Laplace--Beltrami operator.
The Laplace--Beltrami operator is a differential operator that transforms a scalar function defined on the parameter space into another scalar function.
It is independent of the choice of parameterization of the parameter space (see \cite{jost2008riemannian} for the mathematical details).
Appendix \ref{section:LB} provides a precise definition of the Laplace--Beltrami operator.
In a previous study (see \cite{komaki2006shrinkage}), the relationship between the super-harmonicity of the Laplace--Beltrami operator and its dominance over the Jeffreys prior distribution has been discussed.
Based on these approaches, we focus on the eigenfunctions and eigenvalues of the Laplace--Beltrami operator in this study (see also \cite{9429230}).

\begin{proposition} 
  \label{proposition:main_asymptotic}
  The scalar function
  \begin{align}
    K_{ \mathbf{t}} \left( \phi \right)
    :=
    \left( \frac{ \pi_{ \mathbf{t}} \left( \phi \right) }{ \pi_J \left( \phi \right) } \right)^{\frac{1}{2}}
    =
    \prod_{i=1}^h
    \left| \xi^{(i)}_0 \right|^{ \frac{1}{2} \left( t^{(i)} + \frac{r-1}{2} d + 1 \right) }
    \label{definition:K_general}
  \end{align}
  for the prior distribution (\ref{definition:relatively_invariant_priors_wishart}) is an eigenfunction of the Laplace--Beltrami operator with an eigenvalue
  \begin{align}
    \frac{ \LB \, K_{ \mathbf{t} } }{ K_{ \mathbf{t} } }
    =
    \sum_{i=1}^h
    \left(
      \frac{1}{4} k^{(i)}
      \left( t^{(i)} - t^{(i)}_R \right)^2
      -
      \frac{d^2}{16} 
      k^{(i)}
      \left( r - 2 r_{(i)} + k^{(i)} \right)^2
    \right)
    \,.
    \label{formula:eigenvalue}
	\end{align}
\end{proposition}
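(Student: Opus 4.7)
The strategy is to exploit the block-diagonal structure of the Fisher information matrix $H(\phi) = \diag(H^{(1)}, \ldots, H^{(h)})$, together with the factorization
\begin{align*}
K_{\mathbf{t}}(\phi) = \prod_{i=1}^h \left| \xi_0^{(i)} \right|^{c_i}, \qquad c_i := \tfrac{1}{2}\!\left( t^{(i)} + \tfrac{r-1}{2}d + 1 \right),
\end{align*}
so that each factor of $K_{\mathbf{t}}$ depends only on the $\phi^{(i)}$-coordinates of its own block. The key identity $\LB K_{\mathbf{t}} / K_{\mathbf{t}} = \LB \log K_{\mathbf{t}} + \| \nabla \log K_{\mathbf{t}} \|^2$ (which follows from $\nabla K = K \nabla \log K$), combined with $g^{-1} = \diag((H^{(1)})^{-1}, \ldots, (H^{(h)})^{-1})$, decouples the computation across blocks and reduces the task to determining, for each $i$, the two quantities $\left\| \nabla_i \log | \xi_0^{(i)} | \right\|^2_{(H^{(i)})^{-1}}$ and $\LB_i \log | \xi_0^{(i)} |$, where $\LB_i$ and $\nabla_i$ denote the portions of $\LB$ and $\nabla$ involving $\phi^{(i)}$-derivatives built from $(H^{(i)})^{-1}$.

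First I would compute the squared gradient. In natural parameters, the Wishart Fisher metric is $H_\xi(U,V) = \mu\, \tr(\xi^{-1} U \xi^{-1} V)$, and its block decomposition in the enriched parameters follows from the iterated Schur-complement identity $|\xi| = \prod_i |\xi_0^{(i)}|$. Using $\partial_{\xi_0^{(i)}} \log |\xi_0^{(i)}| = (\xi_0^{(i)})^{-1}$ together with the inversion of the $(\xi_{1/2}^{(i)}, \xi_0^{(i)})$-block of the Fisher metric, the norm squared reduces via the trace identity $\tr\bigl((\xi_0^{(i)})^{-1} \xi_0^{(i)} (\xi_0^{(i)})^{-1} \xi_0^{(i)} \bigr) = k^{(i)}$ (with an extra Peirce factor $d$ for Hermitian entries) to a $\phi$-independent constant proportional to $k^{(i)}$. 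For $\LB_i \log |\xi_0^{(i)}|$, the coordinate formula $\LB_i f = |g|^{-1/2} \partial_\alpha\!\bigl(|g|^{1/2} (H^{(i)})^{\alpha\beta} \partial_\beta f\bigr)$ carries a contribution from $\partial_\alpha \log \sqrt{|g|}$, which depends on $\phi^{(i)}$ through $a^{(i)}(\phi^{(i)}) = |\xi_0^{(i)}|^{-(r_{(i)}-1)d - 2}$ and through $b^{(j)}(\phi_{(j-1)}) = |\zeta_{j-1}|^{-k^{(j)}d}$ for $j > i$. The $\zeta_{j-1}$-dependence is unpacked using the recursive Schur-complement definition of $\zeta_{(\cdot)}$.

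Assembling the pieces, $\LB K_{\mathbf{t}} / K_{\mathbf{t}}$ becomes quadratic in each $t^{(i)}$ with a $\phi$-independent coefficient; completing the square against $t^{(i)}_R = -\bigl(\tfrac{2r_{(i)} - k^{(i)} - 1}{2}d + 1\bigr)$ should reproduce formula (\ref{formula:eigenvalue}), with the offset $-\tfrac{d^2}{16} k^{(i)} (r - 2r_{(i)} + k^{(i)})^2$ arising from the accumulated $b^{(j)}$ cross-block pieces. The main obstacle is precisely the bookkeeping of these cross-block derivatives: one must show that $\sum_{j > i} \partial^{(i)} \log |\zeta_{j-1}|$ telescopes and, together with $\partial^{(i)} \log a^{(i)}$, produces the clean linear factor $(r - 2r_{(i)} + k^{(i)})$ free of $\phi$. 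The $\phi$-independence of the eigenvalue is ultimately a consequence of the symmetric-cone structure of the Wishart parameter space, in which each $|\xi_0^{(i)}|$ is a relative invariant of an upper-triangular parabolic subgroup and hence a natural eigenfunction of the invariant Laplacian.
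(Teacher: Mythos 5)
Your proposal is correct and takes essentially the same route as the paper's Appendix~A proof: both reduce everything to the explicit block structure of the Fisher information and its inverse (the maps $V^{(i)}_{0A},\dots,V^{(i)}_{0D}$), evaluate the resulting traces such as $\tr\bigl((\xi^{(i)}_0)^{-1}\,\xi^{(i)}_0\bigr)=k^{(i)}$, and complete the square in $t^{(i)}$ against $t^{(i)}_R$. Your identity $\LB K_{\mathbf{t}}/K_{\mathbf{t}} = \LB \log K_{\mathbf{t}} + \lVert \nabla \log K_{\mathbf{t}} \rVert^2$ is simply a cleaner bookkeeping of the same term-by-term computation the paper carries out for a specially restricted $\mathbf{t}$ (with the minor caveat that the paper normalizes the Fisher metric with $\mu = 1$, so the factor $\mu$ in your metric should be dropped).
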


\begin{proof}[\textbf{\upshape Proof:}]
  See Appendix \ref{section:LB}.
\end{proof}

Our prior distribution $\pi_R \left( \phi \right) d\phi := \pi_{\mathbf{t}_R} \left( \phi \right) d\phi$ has the minimum eigenvalue
\begin{align*}
  \frac{ \LB \, K_{ \mathbf{t}_R } }{ K_{ \mathbf{t}_R } }
  =
  \sum_{i=1}^h
  \left(
    -
    \frac{d^2}{16} 
    k^{(i)}
    \left( r - 2 r_{(i)} + k^{(i)} \right)^2
  \right)
\end{align*}
among the family $\{ \pi_{\mathbf{t}} \left( \phi \right) d\phi \}_{\mathbf{t} \in \mathbb{R}^h}$ of prior distributions.
We asymptotically compare the risk of the Bayesian predictive distribution $\delta^{\mu, \nu}_{\mathbf{t}}$ based on the prior distribution (\ref{definition:relatively_invariant_priors_wishart}) with that of the Bayesian predictive distribution $\delta^{\mu, \nu}_{R}$ based on our prior distributions $\pi_R \left( \phi \right) d\phi$ in the following theorem.
Moreover, our prior distribution $\pi_R \left( \phi \right) d\phi$ asymptotically yields the minimum risk among the family $\{ \pi_{\mathbf{t}} \left( \phi \right) d\phi \}_{\mathbf{t} \in \mathbb{R}^h}$ of prior distributions.

\begin{theorem}
  \label{theorem:main_asymptotic}
  We have
  as $\mu \to \infty$ for each $\phi \in \Phi$
  \begin{align}
    R^{\mu, \nu} \left( \phi, \delta^{\mu, \nu}_{\mathbf{t}} \right)
    &=
    R^{\mu, \nu} \left( \phi, \delta^{\mu, \nu}_{R} \right)
    +
    \frac{\nu}{\mu^2}
    \left(
      \sum_{i=1}^h
      \frac{1}{2} k^{(i)}
      \left( t^{(i)} - t^{(i)}_R \right)^2
    \right)
    +
    O \left( \mu^{-\frac{5}{2}} \right)
    \label{formula:main_asymptotic}
    \,.
  \end{align}
\end{theorem}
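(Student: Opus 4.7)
The plan is to combine a Komaki-type asymptotic expansion of the risk of Bayesian predictive distributions with the explicit eigenvalue formula obtained in Proposition~\ref{proposition:main_asymptotic}. Concretely, for any prior $\pi$ in the family $\{\pi_{\mathbf t}\}$, setting $K_\pi := (\pi/\pi_J)^{1/2}$, a Laplace expansion of the posterior as $\mu \to \infty$ yields an expression of the form
\begin{align*}
  R^{\mu,\nu}\!\left(\phi, \delta^{\mu,\nu}_\pi\right)
  - R^{\mu,\nu}\!\left(\phi, \delta^{\mu,\nu}_J\right)
  = \frac{2\nu}{\mu^2}\,\frac{\LB K_\pi(\phi)}{K_\pi(\phi)} + O\!\left(\mu^{-5/2}\right),
\end{align*}
where the prior enters only through the intrinsic geometric quantity $\LB K_\pi / K_\pi$ (cf.\ \cite{komaki2006shrinkage, 9429230}). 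Applying this to $\pi = \pi_{\mathbf t}$ and $\pi = \pi_R$ and subtracting will isolate the desired quadratic form in $t^{(i)} - t_R^{(i)}$.

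\textbf{Steps.} First, I would establish the displayed expansion by adapting Komaki's (2006) general result; the conditional reducibility from Section~\ref{section:conditional_reducibility_wishart} is especially useful here, since it lets me decompose the risk as $\sum_i R^{(i)\mu,\nu}\!\left(\phi_{(i)}, \delta^{(i)\mu,\nu}_{\pi^{(i)}}\right)$ and carry out the Laplace expansion block by block, the block-diagonal structure $H(\phi) = \diag\!\left(H^{(1)},\ldots,H^{(h)}\right)$ of the Fisher information ensuring that cross-terms vanish. Second, subtract the two expansions to obtain
\begin{align*}
  R^{\mu,\nu}\!\left(\phi, \delta^{\mu,\nu}_{\mathbf t}\right)
  - R^{\mu,\nu}\!\left(\phi, \delta^{\mu,\nu}_R\right)
  = \frac{2\nu}{\mu^2}\!\left(\frac{\LB K_{\mathbf t}}{K_{\mathbf t}} - \frac{\LB K_R}{K_R}\right) + O\!\left(\mu^{-5/2}\right).
\end{align*}
Third, substitute the eigenvalues from (\ref{formula:eigenvalue}): the $\mathbf t$-independent term $-\tfrac{d^2}{16} k^{(i)}(r - 2r_{(i)} + k^{(i)})^2$ cancels in the difference, and the quadratic term evaluated at $\mathbf t_R$ vanishes, so
\begin{align*}
  \frac{\LB K_{\mathbf t}}{K_{\mathbf t}} - \frac{\LB K_R}{K_R}
  = \sum_{i=1}^h \frac{1}{4}\, k^{(i)}\!\left(t^{(i)} - t_R^{(i)}\right)^2.
\end{align*}
Multiplying by $2\nu/\mu^2$ reproduces exactly the coefficient $\frac{\nu}{\mu^2}\sum_{i=1}^h \tfrac{1}{2} k^{(i)} (t^{(i)} - t_R^{(i)})^2$ appearing in (\ref{formula:main_asymptotic}).

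\textbf{Main obstacle.} The substantive work lies in Step~1: verifying the asymptotic expansion and identifying the $\mu^{-2}$ coefficient as $2\nu\,\LB K_\pi/K_\pi$ intrinsically, i.e.\ independent of the chosen parameterization. The remaining manipulations are algebraic. Two viable routes exist: (i) cite Komaki's general theorem after checking that the Wishart family satisfies the required regularity (smoothness of the exponential family, integrability of the score and Hessian, posterior concentration at rate $\mu^{-1/2}$); or (ii) exploit the explicit exponential-family structure of the Wishart law together with the conditional-reducibility decomposition to derive the expansion directly. The parameterization-free appearance of $\LB$ is the geometric content that allows Proposition~\ref{proposition:main_asymptotic} to be used as a black box for the final identification of the coefficient.
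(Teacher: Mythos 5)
Your proposal is correct and follows essentially the same route as the paper: both invoke the Komaki (2006) expansion $R^{\mu,\nu}(\phi,\delta^{\mu,\nu}_{\mathbf t}) - R^{\mu,\nu}(\phi,\delta^{\mu,\nu}_J) = 2\nu\mu^{-2}\,\LB K_{\mathbf t}/K_{\mathbf t} + O(\mu^{-5/2})$, substitute the eigenvalue formula (\ref{formula:eigenvalue}) from Proposition~\ref{proposition:main_asymptotic}, and subtract the $\mathbf t = \mathbf t_R$ case so that the $\mathbf t$-independent term cancels. The only difference is that you flag the verification of Komaki's expansion as the substantive obstacle, whereas the paper simply cites it.
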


\begin{proof}[\textbf{\upshape Proof:}]
  We have
  \begin{align}
    R^{\mu, \nu} \left( \phi, \delta^{\mu, \nu}_{\mathbf{t}} \right)
    =
    R^{\mu, \nu} \left( \phi, \delta^{\mu, \nu}_{J} \right)
    + 2 \frac{ \LB K \left( \phi \right) }{ K \left( \phi \right) } \frac{\nu}{\mu^2}
    + O \left( \mu^{-\frac{5}{2}} \right)
    \label{formula:asymptotic_expansion_R_superharmonicity}
  \end{align}
  as $\mu \to \infty$ for each $\phi \in \Phi$ (see Equation (4) in \cite{komaki2006shrinkage}).
  Therefore, we have
  \begin{align}
    R^{\mu, \nu} \left( \phi, \delta^{\mu, \nu}_{\mathbf{t}} \right)
    &=
    R^{\mu, \nu} \left( \phi, \delta^{\mu, \nu}_{J} \right)
    +
    \frac{\nu}{\mu^2}
    \sum_{i=1}^h
    \left(
      \frac{1}{2} k^{(i)}
      \left( t^{(i)} - t^{(i)}_R \right)^2
      -
      \frac{d^2}{8} 
      k^{(i)}
      \left( r - 2 r_{(i)} + k^{(i)} \right)^2
    \right)
    +
    O \left( \mu^{-\frac{5}{2}} \right)
    \label{formula:main_asymptotic_1}
  \end{align}
  as $\mu \to \infty$ for each $\phi \in \Phi$.
  Particularly, we have
  \begin{align}
    R^{\mu, \nu} \left( \phi, \delta^{\mu, \nu}_{R} \right)
    &=
    R^{\mu, \nu} \left( \phi, \delta^{\mu, \nu}_{J} \right)
    +
    \frac{\nu}{\mu^2}
    \sum_{i=1}^h
    \left(
      -
      \frac{d^2}{8} 
      k^{(i)}
      \left( r - 2 r_{(i)} + k^{(i)} \right)^2
    \right)
    +
    O \left( \mu^{-\frac{5}{2}} \right)
    \label{formula:main_asymptotic_2}
  \end{align}
  for each $\phi \in \Phi$.
  The asymptotic expansions (\ref{formula:main_asymptotic_1}) and (\ref{formula:main_asymptotic_2}) provide the asymptotic expansion (\ref{formula:main_asymptotic}).
\end{proof}

According to Theorem \ref{theorem:main_asymptotic}, the Bayesian predictive distribution $\delta^{\mu, \nu}_R$ based on our prior distribution $\pi_R \left( \phi \right) d\phi$ asymptotically dominates the Bayesian predictive distribution $\delta^{\mu, \nu}_C$ based on the reference prior distribution $\pi_C \left( \phi \right) d\phi$.
This phenomenon motivates us to investigate our prior distribution $\pi_R \left( \phi \right) d\phi$.
Note that the residual $O \left( \mu^{-5/2} \right)$ term in (\ref{formula:main_asymptotic}) may depend on the choice of $\phi \in \Phi$.
Therefore, at this stage, the convergence may not be uniform on the parameter space $\Phi$ as $\mu \to \infty$.
However, this phenomenon is not an asymptotic property as demonstrated in later sections.

\section{Exact behavior of risks of Bayesian predictive distributions}
\label{section:main_exact}

In this section, we explicitly calculate the risks of Bayesian predictive distributions without using asymptotic expansions and clarify its dependency on the sizes of current and future observations.
We prove that the asymptotic property discussed in Theorem \ref{theorem:main_asymptotic} holds for any value of $\mu$ and $\nu$.

The conditional reducibility  of the family of Wishart distributions enables us to decompose the risk of a Bayesian predictive distribution into the parts that are related to the corresponding groups of parameters.
In the next proposition, we show that the risk $R^{\mu, \nu} \left( \phi, \delta^{\mu, \nu}_{\mathbf{t}} \right)$ of the Bayesian predictive distribution $\delta^{\mu, \nu}_{\mathbf{t}}$ based on the prior distribution (\ref{definition:relatively_invariant_priors_wishart}) decomposes into the sum $\sum_{i=1}^h R^{ (i) \mu, \nu} \left( t^{(i)} \right)$ of constant terms, where each constant term $R^{ (i) \mu, \nu} \left( t^{(i)} \right)$ only depends on the prior distribution (\ref{definition:relatively_invariant_priors_wishart_i}) on the parameter space $\Phi^{(i)}$.
That is, the risk $R^{\mu, \nu} \left( \phi, \delta^{\mu, \nu}_{\mathbf{t}} \right)$ is a function $R^{ \mu, \nu} \left( \mathbf{t} \right)$ of the hyperparameter $\mathbf{t} \in \mathbb{R}^h$ and independent of the value of $\phi \in \Phi$.
Each constant term $R^{ (i) \mu, \nu} \left( t^{(i)} \right)$ corresponds to the risk (\ref{definition:R_i}), which is defined as the average of the Kullback--Leibler divergence from the true conditional distribution, of the conditional Bayesian predictive distribution.

\begin{proposition}
  \label{proposition:main_exact}
  Let $\mu > \left( r - 1 \right) d / 2$ and $\nu > \left( r - 1 \right) d / 2$.
  The risk $R^{\mu, \nu} \left( \phi, \delta^{\mu, \nu}_{\mathbf{t}} \right)$ of the Bayesian predictive distribution $\delta^{\mu, \nu}_{\mathbf{t}}$ based on the prior distribution (\ref{definition:relatively_invariant_priors_wishart}) is calculated as
  \begin{align}
    R^{ \mu, \nu} \left( \mathbf{t} \right)
    :=
    \sum_{i=1}^h R^{ (i) \mu, \nu} \left( t^{(i)} \right)
    \,,
    \label{definition:R_t}
  \end{align}
  where
  \begin{align}
    R^{ (i) \mu, \nu} \left( t^{(i)} \right)
    &:=
    - \nu k^{(i)} 
    - \log \Gamma_{k^{(i)}} \left( t^{(i)} + \mu + \nu + \frac{n_{(i)}}{r_{(i)}} \right)
    + \log \Gamma_{k^{(i)}} \left( t^{(i)} + \mu + \frac{n_{(i)}}{r_{(i)}} \right)
    \nonumber\\
    &\quad
    + \left( t^{(i)} + \mu + \nu + \frac{n_{(i)}}{r_{(i)}} \right)
    \psi_{r_{(i)}} \left( \mu + \nu \right)
    - \left( t^{(i)} + \mu + \frac{n_{(i)}}{r_{(i)}} \right)
    \psi_{r_{(i)}} \left( \mu \right)
    \nonumber\\
    &\quad
    - \left( t^{(i)} + \mu + \nu + \frac{n_{(i-1)}}{r_{(i-1)}} \right)
    \psi_{r_{(i-1)}} \left( \mu + \nu \right)
    + \left( t^{(i)} + \mu + \frac{n_{(i-1)}}{r_{(i-1)}} \right)
    \psi_{r_{(i-1)}} \left( \mu \right)
    \label{definitoin:R_i}
  \end{align}
  for $t^{(i)} > - \mu - \left( \left( r_{(i)} - k^{(i)} \right) / 2 \right) d - 1$.
  The last two terms in (\ref{definitoin:R_i}) are interpreted as $0$ if $i = 1$.
\end{proposition}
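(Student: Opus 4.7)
The plan is to combine the conditional reducibility of the family of Wishart distributions from Section \ref{section:conditional_reducibility_wishart} with a direct conjugate computation of each conditional Bayesian predictive distribution. Because the prior (\ref{definition:relatively_invariant_priors_wishart}) factorizes as $\pi_{\mathbf{t}}(\phi)\,d\phi=\prod_{i=1}^{h}\pi^{(i)}_{t^{(i)}}(\phi^{(i)})\,d\phi^{(i)}$, the general decomposition of the risk established in Section \ref{section:conditional_reducibility} gives $R^{\mu,\nu}(\phi,\delta^{\mu,\nu}_{\mathbf{t}})=\sum_{i=1}^{h}R^{(i)\mu,\nu}(\phi_{(i)},\delta^{(i)\mu,\nu}_{\pi^{(i)}_{t^{(i)}}})$. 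It therefore suffices to compute each conditional Bayesian risk and to show that it is independent of $\phi_{(i)}$ and equals $R^{(i)\mu,\nu}(t^{(i)})$.

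Fix $i$. Conditionally on $X_{(i-1)}$, the joint distribution of $X^{(i)}=(X^{(i)}_{1/2},X^{(i)}_{0})$ depends only on $\phi^{(i)}=(\xi^{(i)}_{1/2},\xi^{(i)}_{0})$, and the corresponding likelihood has the standard exponential-family form whose natural statistic is $(X^{(i)}_{1/2},X^{(i)}_{0})$ together with a factor $|X_{(i-1)}|^{-1}$-type term produced by the principal-minor structure of the Wishart density. Since the prior (\ref{definition:relatively_invariant_priors_wishart_i}) is of the form $|\xi^{(i)}_{0}|^{t^{(i)}}d\xi^{(i)}_{1/2}d\xi^{(i)}_{0}$, it is conjugate, and the hyperparameter update for the posterior $\pi^{(i)\mu}_{t^{(i)}}(\phi^{(i)}\mid x_{(i)})$ can be written in closed form. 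Carrying out this update and then integrating against $p^{(i)\nu}(y^{(i)}\mid y_{(i-1)},\phi^{(i)})$, I would obtain an explicit formula for $\delta^{(i)\mu,\nu}_{\pi^{(i)}_{t^{(i)}}}(y^{(i)}\mid y_{(i-1)},x_{(i)})$; the integrability condition $t^{(i)}>-\mu-((r_{(i)}-k^{(i)})/2)d-1$ is exactly what makes the posterior proper, and it is inherited by the formula for $R^{(i)\mu,\nu}(t^{(i)})$.

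Next, I would substitute this predictive formula into the definition (\ref{definition:R_i}) of $R^{(i)\mu,\nu}$ and evaluate the expectation. The log-ratio splits into (a) a difference of $\log\Gamma_{k^{(i)}}$ normalizing constants coming from the posterior update, and (b) terms linear in the log-determinants $\log|x_{(i-1)}|$, $\log|x_{(i)}|$, $\log|y_{(i-1)}|$, $\log|y_{(i)}|$ (equivalently $\log|\xi^{(i)}_{0}|$-type terms get cancelled by the $p^{(i)\nu}/p^{(i)\nu}$ ratio). The expectations of these log-determinants under $X\sim W_{r}(\mu,\phi)$ and $Y\sim W_{r}(\nu,\phi)$ are given by the marginal identity $E[\log|X_{(j)}|]=\psi_{r_{(j)}}(\mu)-\log|\zeta_{(j)}|$ (and similarly for $Y$ with $\nu$); by the conditional reducibility the marginals $X_{(j)}\sim W_{r_{(j)}}(\mu,\zeta_{(j)})$ are explicit. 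When these expectations are collected, all $\log|\zeta_{(\cdot)}|$ terms cancel, proving that the result depends on $\mathbf{t}$, $\mu$, $\nu$ but not on $\phi$, and the remaining terms assemble into exactly (\ref{definitoin:R_i}) once one tracks the constants $n_{(i)}/r_{(i)}$ and $n_{(i-1)}/r_{(i-1)}$ coming from the exponents $\mu-n_{(i)}/r_{(i)}$ appearing in the Wishart densities of $X_{(i)}$ and $X_{(i-1)}$.

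The main obstacle is bookkeeping rather than conceptual: one must carefully track the Jacobian factor $d\xi=d\phi$, the exponents produced by the $|x_{(i)}|^{\mu-n_{(i)}/r_{(i)}}/|x_{(i-1)}|^{\mu-n_{(i-1)}/r_{(i-1)}}$ ratio that yields the conditional density of $X^{(i)}$ given $X_{(i-1)}$, the updated hyperparameter in the posterior, and ensure that the four $\psi_{r_{(\cdot)}}$ terms with their coefficients in (\ref{definitoin:R_i}) emerge with the correct signs. The vanishing of the last two terms when $i=1$ simply reflects the convention $r_{(0)}=0$ and the fact that $X_{(0)}$ is empty, so no conditioning takes place. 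The independence of $R^{(i)\mu,\nu}$ from $\phi_{(i)}$ can alternatively be anticipated from the relative invariance of $\pi^{(i)}_{t^{(i)}}$ under the triangular-block group action — an interpretation developed in Section \ref{section:relative_invariance} — but inside the proof it falls out naturally from the cancellation of the $\log|\zeta_{(\cdot)}|$ terms.
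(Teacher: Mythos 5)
Your proposal follows essentially the same route as the paper: decompose the risk via conditional reducibility, compute each conditional Bayesian predictive distribution by the conjugate hyperparameter update (with the propriety condition on $t^{(i)}$), and evaluate the expected log-ratio using $E\left[\log\left|X_{(j)}\right|\right]=\psi_{r_{(j)}}\left(\mu\right)-\log\left|\zeta_{(j)}\right|$ so that the $\log\left|\zeta_{(\cdot)}\right|$ terms cancel against $\nu\log\left|\xi^{(i)}_0\right|$ via $\left|\zeta_i\right|=\left|\zeta_{i-1}\right|\left|\xi^{(i)}_0\right|$. The only cosmetic difference is that the paper carries out the computation for the general enriched standard conjugate prior with hyperparameter $\mathbf{s}$ and then sets $\mathbf{s}=0$, whereas you work with $\mathbf{s}=0$ throughout.
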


\begin{proof}[\textbf{\upshape Proof:}]
  See Appendix \ref{section:ESCPD}.
\end{proof}

The asymptotic expansion of (\ref{definitoin:R_i}) is calculated below using the asymptotic expansion (\ref{formula:multivariate_loggamma_mu_x}) of the multivariate log-gamma function and the asymptotic expansion (\ref{formula:multivariate_polygamma_mu_x}) of the multivariate polygamma function in Appendix \ref{section:MPF}.
\begin{align}
  R^{ (i) \mu, \nu} \left( t^{(i)} \right)
  =
  R^{ (i) \mu, \nu} \left( t^{(i)}_R \right)
  +
  \frac{\nu}{\mu^2}
  \left(
    \frac{1}{2} k^{(i)}
    \left( t^{(i)} - t^{(i)}_R \right)^2
  \right)
  +
  O \left( \mu^{-3} \right)
  \label{formula:main_asymptotic_i}
\end{align}
as $\mu \to \infty$,
where
\begin{align}
  R^{ (i) \mu, \nu} \left( t^{(i)}_R \right)
  &=
  \frac{\nu}{\mu}
  \left(
    \frac{1}{2} k^{(i)} + \frac{d}{4} k^{(i)} \left( 2 r_{(i)} - k^{(i)} - 1 \right)
  \right)
  -
  \frac{\nu^2}{\mu^2}
  \left(
    \frac{1}{4} k^{(i)}
    +
    \frac{d}{8} k^{(i)} \left( 2 r_{(i)} - k^{(i)} - 1 \right)
  \right)
  \nonumber\\
  &\quad+
  \frac{\nu}{\mu^2}
  \left(
    \frac{1}{6} k^{(i)}
    + \frac{d}{4} k^{(i)} \left( 2 r_{(i)} - k^{(i)} - 1 \right)
    + \frac{d^2}{24}
    k^{(i)}
    \left(
      3 {r_{(i)}}^2
      - 6 r_{(i)}
      - 3 k^{(i)} \left( r_{(i)} - 1 \right)
      + 2 {k^{(i)}}^2
      + 1
    \right)
  \right)
  +
  O \left( \mu^{-3} \right)
  \nonumber\\
  &=
  \frac{1}{2} \frac{\nu}{\mu} \left( n_{(i)} - n_{(i-1)} \right)
  + O \left( \mu^{-2} \right)
  \label{formula:main_asymptotic_Ri}
\end{align}
is the asymptotic expansion of the risk $R^{ (i) \mu, \nu} \left( t^{(i)}_R \right)$ as $\mu \to \infty$.
The asymptotic expansion (\ref{formula:main_asymptotic_i}) is consistent with the asymptotic property discussed in Theorem \ref{theorem:main_asymptotic}.
The expansion  shows that the term (\ref{definitoin:R_i}) asymptotically achieves its minimum at $t^{(i)} = t^{(i)}_R$.

This phenomenon is not an asymptotic property.
In the next theorem, we show that the term (\ref{definitoin:R_i}) takes its unique minimum at $t^{(i)} = t^{(i)}_R$ for any value of $\mu$ and $\nu$.
In other words, the function $R^{ \mu, \nu} \left( \mathbf{t} \right)$ of the hyperparameter $\mathbf{t} \in \mathbb{R}^h$ has its unique minimum at $\mathbf{t} = \mathbf{t}_R$ for any value of $\mu$ and $\nu$.

\begin{theorem}
  \label{theorem:main_exact}
  Let $\mu > \left( r - 1 \right) d / 2$ and $\nu > \left( r - 1 \right) d / 2$.
  The risk $R^{\mu, \nu} \left( \mathbf{t} \right)$ is a convex function of $\mathbf{t}$ and attains its minimum $R^{\mu, \nu}_R := R^{\mu, \nu} \left( \mathbf{t}_R \right)$ at $\mathbf{t} = \mathbf{t}_R$.
\end{theorem}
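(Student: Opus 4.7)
The plan is to decouple the problem into $h$ independent one-dimensional convex-minimization problems. By Proposition \ref{proposition:main_exact}, the risk decomposes as
\[
R^{\mu,\nu}(\mathbf{t}) = \sum_{i=1}^h R^{(i)\mu,\nu}\bigl(t^{(i)}\bigr),
\]
with the $i$-th summand depending only on the coordinate $t^{(i)}$. It therefore suffices to show, for each $i$, that the univariate function $t \mapsto R^{(i)\mu,\nu}(t)$ is strictly convex on its admissible range and attains its unique minimum at $t = t^{(i)}_R$; the theorem then follows by summation.

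For strict convexity I would differentiate (\ref{definitoin:R_i}) twice in $t$. The terms linear in $t$ contribute nothing to the second derivative, leaving only the log-gamma contributions:
\[
\frac{\partial^2 R^{(i)\mu,\nu}}{\partial t^2}
= \psi^{(1)}_{k^{(i)}}\!\left(t + \mu + \tfrac{n_{(i)}}{r_{(i)}}\right)
- \psi^{(1)}_{k^{(i)}}\!\left(t + \mu + \nu + \tfrac{n_{(i)}}{r_{(i)}}\right).
\]
The paper observes that the univariate trigamma $\psi^{(1)}$ is positive and strictly decreasing; by (\ref{formula:polygamma}) the same is true of $\psi^{(1)}_{k^{(i)}}$. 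Since $\nu > 0$ shifts the argument to the right, the subtracted term is strictly smaller, and the second derivative is strictly positive.

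To identify the critical point I would compute the first derivative and collapse the multivariate digammas via (\ref{formula:polygamma}): for any $s$,
\[
\psi_{r_{(i)}}(s) - \psi_{r_{(i-1)}}(s)
= \sum_{j=1}^{k^{(i)}} \psi\!\left(s - \tfrac{r_{(i-1)} + j - 1}{2}\,d\right)
= \psi_{k^{(i)}}\!\left(s - \tfrac{r_{(i-1)} d}{2}\right).
\]
After this telescoping, the condition $\partial R^{(i)\mu,\nu}/\partial t = 0$ becomes
\[
\psi_{k^{(i)}}\!\left(t + \mu + \tfrac{n_{(i)}}{r_{(i)}}\right)
- \psi_{k^{(i)}}\!\left(t + \mu + \nu + \tfrac{n_{(i)}}{r_{(i)}}\right)
= \psi_{k^{(i)}}\!\left(\mu - \tfrac{r_{(i-1)} d}{2}\right)
- \psi_{k^{(i)}}\!\left(\mu + \nu - \tfrac{r_{(i-1)} d}{2}\right),
\]
and a direct check from the definition of $t^{(i)}_R$ in (\ref{definition:right_invariant_prior}) yields the crucial algebraic identity
\[
t^{(i)}_R + \tfrac{n_{(i)}}{r_{(i)}} = -\tfrac{r_{(i-1)} d}{2},
\]
which makes the two sides of the critical equation coincide at $t = t^{(i)}_R$. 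Strict convexity, already established, then forces this to be the unique minimizer.

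The one step I would treat most carefully is checking that $t^{(i)}_R$ actually lies in the admissible range $t > -\mu - ((r_{(i)} - k^{(i)})/2)\,d - 1$ required by Proposition \ref{proposition:main_exact}, so that the minimum is attained in the interior rather than on the boundary. A short computation shows that this condition is equivalent to $\mu > (r_{(i)} - 1) d/2$, which is implied by the standing hypothesis $\mu > (r-1) d/2$ together with $r_{(i)} \leq r$. This is the only place where the lower bound on $\mu$ is actually used, so it merits explicit verification.
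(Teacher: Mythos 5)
Your proposal is correct and follows essentially the same route as the paper's proof: decompose the risk via Proposition \ref{proposition:main_exact}, show each $R^{(i)\mu,\nu}$ is strictly convex because the trigamma-difference second derivative is positive, and verify that the first derivative vanishes at $t^{(i)}_R$ using (\ref{formula:polygamma}). You are merely more explicit than the paper in spelling out the telescoping identity, the key relation $t^{(i)}_R + n_{(i)}/r_{(i)} = -r_{(i-1)}d/2$, and the check that $t^{(i)}_R$ lies in the admissible range, all of which are correct.
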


\begin{proof}[\textbf{\upshape Proof:}]
  The derivatives of the risk $R^{ (i) \mu, \nu} \left( t^{(i)} \right)$ with respect to $t^{(i)}$ are calculated as
  \begin{align*}
    \frac{\partial}{\partial t^{(i)}}
    R^{ (i) \mu, \nu} \left( t^{(i)} \right)
    &=
    -
    \psi_{k^{(i)}} \left( t^{(i)} + \mu + \nu + \frac{n_{(i)}}{r_{(i)}} \right)
    +
    \psi_{k^{(i)}} \left( t^{(i)} + \mu + \frac{n_{(i)}}{r_{(i)}} \right)
    +
    \psi_{k^{(i)}} \left( \mu + \nu - \frac{ r_{(i)} - k^{(i)} }{2} d \right)
    -
    \psi_{k^{(i)}} \left( \mu - \frac{ r_{(i)} - k^{(i)} }{2} d \right)
    \,,\\
    \frac{\partial^2}{\partial {t^{(i)}}^2}
    R^{ (i) \mu, \nu} \left( t^{(i)} \right)
    &=
    - \psi^{(1)}_{k^{(i)}} \left( t^{(i)} + \mu + \nu + \frac{n_{(i)}}{r_{(i)}}\right)
    + \psi^{(1)}_{k^{(i)}} \left( t^{(i)} + \mu + \frac{n_{(i)}}{r_{(i)}} \right)
    \,,
  \end{align*}
  where we employ (\ref{formula:polygamma}).
  Each $R^{ (i) \mu, \nu} \left( t^{(i)} \right)$ takes its minimum at $t^{(i)} = t^{(i)}_R$
  because
  \begin{align*}
  \left. \frac{\partial}{\partial t^{(i)}} \right|_{t^{(i)} = t^{(i)}_R } R^{ (i) \mu, \nu} \left( t^{(i)} \right) = 0
  \,,\;
  \frac{\partial^2}{\partial {t^{(i)}}^2} R^{ (i) \mu, \nu} \left( t^{(i)} \right) > 0
  \,.
  \end{align*}
\end{proof}

Theorem \ref{theorem:main_exact} shows that the Bayesian predictive distribution $\delta^{\mu, \nu}_R$ based on our prior distribution $\pi_R \left( \phi \right) d\phi$ dominates the Bayesian predictive distribution $\delta^{\mu, \nu}_C$ based on the reference prior distribution $\pi_C \left( \phi \right) d\phi$ for any value of $\mu$ and $\nu$.

We define the normalized risk
\begin{align}
  \mathrm{NR}^{\mu, \nu} \left( \mathbf{t} \right)
  :=
  \frac{\mu}{\nu} R^{\mu, \nu} \left( t \right)
  \label{definition:NR}
\end{align}
of the risk (\ref{definition:R_t}).
The asymptotic expansions (\ref{formula:main_asymptotic_i}) and (\ref{formula:main_asymptotic_Ri}) show that
\begin{align}
  \mathrm{NR}^{\mu, \nu} \left( \mathbf{t} \right) = \frac{n}{2} + O \left( \mu^{-1} \right)
  \label{formula:NR}
\end{align}
as $\mu \to \infty$, where the residual $O \left( \mu^{-1} \right)$ term may depend on the value of $\nu$.
The first $O \left( 1 \right)$ term $n / 2$ in (\ref{formula:NR}) is the expected term because we used $n$ real parameters $\xi \in \Xi$ for the prediction of $Y \sim W_r \left( \nu, \xi \right)$ (see \cite{komaki2009bayesian} for example).
Theorem \ref{theorem:main_asymptotic} shows how the choice of the prior distribution (\ref{definition:relatively_invariant_priors_wishart}) asymptotically affects the performance of the prediction of $Y \sim W_r \left( \nu, \xi \right)$ as $\mu \to \infty$.
Recall that, in the case of the prediction of the sample variance-covariance matrix of the multivariate normal distributions, a large value of $\mu$ corresponds to a large sample size of the observation.
However, contrary to expectations, the expression (\ref{formula:main_asymptotic}) is silent regarding the behavior of the performance of the prediction of $Y \sim W_r \left( \nu, \xi \right)$ when this sample $\mu$ is small.
Theorem \ref{theorem:main_exact} ensures that the condition for the relative order of the risks (\ref{definition:R_t}) is preserved when the size of the observation is relatively small.

Lastly, let us remark on the parameter $\nu > \left( r - 1 \right) d / 2$.
This parameter $\nu$ corresponds to the size of the prediction.
Therefore, we might expect that the risk (\ref{definition:R}) should be proportional to this parameter $\nu$.
However, the risk (\ref{definition:R}) of the Bayesian predictive distribution is not linear in the parameter $\nu$, exhibiting an interesting property of the Bayesian prediction.

\section{Geometry of Bayesian predictive distributions}
\label{section:geometry_of_bayesian_predictive_distribution}

In the previous section, we showed that the risk of the Bayesian predictive distribution based on the prior distribution (\ref{definition:relatively_invariant_priors_wishart}) decomposes into a sum of constant terms and depends only on the value of the hyperparameter $\mathbf{t} \in \mathbb{R}^h$.
In this section, we consider case $h = 2$ and investigate the dependence of the performance of the prior distribution (\ref{definition:relatively_invariant_priors_wishart}) on the values of $\mu$ and $\nu$.
We set the partition $\mathbf{k} = \left( k^{(1)}, k^{(2)} \right) = \left( k, r - k \right)$. Here, we assume $r \geq 2$ and $0 < k < r$.
In the context of the reference approach, $\phi^{(1)} = \left( \zeta_1 \right) \in \Phi^{(1)}$ and $\phi^{(2)} = \left( \xi_{1/2}, \xi_0 \right) \in \Phi^{(2)}$ correspond to the parameter of interest and nuisance parameter, respectively.

First, we interpret Theorem \ref{proposition:main_exact} for case $h = 2$.
Recall that we have the decomposition $R^{ \mu, \nu} \left( \mathbf{t} \right) = R^{ (1) \mu, \nu} \left( t^{(1)} \right) + R^{ (2) \mu, \nu} \left( t^{(2)} \right)$ for $\mathbf{t} = \left( t^{(1)}, t^{(2)} \right)$.
The term $R^{ (1) \mu, \nu} \left( t^{(1)} \right)$ corresponds to the risk of the Bayesian predictive distribution $\delta^{(1) \mu, \nu} \left( y^{(1)} \;\middle|\; x^{(1)} \right) dy^{(1)}$ for the random variable $Y^{(1)}$ and the term $R^{ (2) \mu, \nu} \left( t^{(2)} \right)$ corresponds to the risk of the conditional Bayesian predictive distribution $\delta^{(2) \mu, \nu} \left( y^{(2)} \;\middle|\; x^{(1)}, x^{(2)}, y^{(1)} \right) dy^{(2)}$ for the conditional random variable $\left. Y^{(2)} \;\middle|\; Y^{(1)} \right.$.
In the next proposition, we show the relationship of the magnitude between the risks $R^{\mu, \nu}_R$, $R^{\mu, \nu}_C$, and $R^{\mu, \nu}_J$.

\begin{proposition}
  \label{proposition:main_exact_h2}
  We obtain
  \begin{align*}
    R^{(1) \mu, \nu}_R = R^{(1) \mu,  \nu}_C < R^{(1) \mu, \nu}_J
    \,,\;
    R^{(2) \mu, \nu}_R < R^{(2) \mu,  \nu}_C = R^{(2) \mu, \nu}_J
    \,.
  \end{align*}
  Therefore,
  \begin{align*}
    R^{\mu, \nu}_R < R^{\mu, \nu}_C < R^{\mu, \nu}_J
  \end{align*}
  for any $\mu > \left( r - 1 \right) d / 2$ and $\nu > \left( r - 1 \right) d / 2$.
\end{proposition}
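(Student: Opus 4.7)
The plan is to exploit Theorem \ref{theorem:main_exact} (strict convexity of each $R^{(i)\mu,\nu}\!\left(t^{(i)}\right)$ with unique minimum at $t^{(i)}=t^{(i)}_R$) together with the observation that, in the two-block case, the three hyperparameter vectors $\mathbf{t}_R$, $\mathbf{t}_C$, $\mathbf{t}_J$ coincide on certain coordinates. Concretely, I would first compute the three hyperparameters coordinatewise: for $i=1$ we have $r_{(1)}=k^{(1)}=k$, so
\begin{align*}
t^{(1)}_C=-\tfrac{k-1}{2}d-1,\qquad t^{(1)}_R=-\tfrac{2k-k-1}{2}d-1=-\tfrac{k-1}{2}d-1,
\end{align*}
yielding $t^{(1)}_R=t^{(1)}_C$, while $t^{(1)}_J=-\tfrac{r-1}{2}d-1\neq t^{(1)}_R$ because $k<r$. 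For $i=2$ we have $r_{(2)}=r$ and $k^{(2)}=r-k$, so
\begin{align*}
t^{(2)}_C=-\tfrac{r-1}{2}d-1=t^{(2)}_J,\qquad t^{(2)}_R=-\tfrac{2r-(r-k)-1}{2}d-1=-\tfrac{r+k-1}{2}d-1,
\end{align*}
so $t^{(2)}_C=t^{(2)}_J\neq t^{(2)}_R$ because $k>0$.

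Next I would invoke Proposition \ref{proposition:main_exact}, which guarantees the additive decomposition $R^{\mu,\nu}(\mathbf{t})=R^{(1)\mu,\nu}\!\left(t^{(1)}\right)+R^{(2)\mu,\nu}\!\left(t^{(2)}\right)$, so each block-risk depends only on the corresponding coordinate of $\mathbf{t}$. The two equalities $R^{(1)\mu,\nu}_R=R^{(1)\mu,\nu}_C$ and $R^{(2)\mu,\nu}_C=R^{(2)\mu,\nu}_J$ then follow immediately from the coincidences identified above.

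For the two strict inequalities $R^{(1)\mu,\nu}_R<R^{(1)\mu,\nu}_J$ and $R^{(2)\mu,\nu}_R<R^{(2)\mu,\nu}_C$, I would apply the strict convexity established in the proof of Theorem \ref{theorem:main_exact}: the second derivative $\partial^2 R^{(i)\mu,\nu}/\partial (t^{(i)})^2=-\psi^{(1)}_{k^{(i)}}\!\left(t^{(i)}+\mu+\nu+n_{(i)}/r_{(i)}\right)+\psi^{(1)}_{k^{(i)}}\!\left(t^{(i)}+\mu+n_{(i)}/r_{(i)}\right)$ is strictly positive, since $-\psi^{(1)}$ is strictly decreasing. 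Hence $t^{(i)}\mapsto R^{(i)\mu,\nu}\!\left(t^{(i)}\right)$ is strictly convex with unique minimum at $t^{(i)}_R$, so $t^{(1)}_J\neq t^{(1)}_R$ forces $R^{(1)\mu,\nu}_J>R^{(1)\mu,\nu}_R$ and $t^{(2)}_C\neq t^{(2)}_R$ forces $R^{(2)\mu,\nu}_C>R^{(2)\mu,\nu}_R$. Adding the block-inequalities then yields $R^{\mu,\nu}_R<R^{\mu,\nu}_C<R^{\mu,\nu}_J$.

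The main obstacle is not really conceptual; it is only to verify algebraically the two coincidences $t^{(1)}_R=t^{(1)}_C$ and $t^{(2)}_C=t^{(2)}_J$ without arithmetic slips, and to confirm strictness of the convexity already noted in the proof of Theorem \ref{theorem:main_exact} (which I expect to just cite rather than re-derive). No further work on the explicit form \eqref{definitoin:R_i} should be needed.
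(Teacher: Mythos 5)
Your proposal is correct and follows essentially the same route as the paper's proof: identify the coordinatewise coincidences $t^{(1)}_R=t^{(1)}_C$ and $t^{(2)}_C=t^{(2)}_J$ (with $t^{(1)}_J<t^{(1)}_C$ and $t^{(2)}_R<t^{(2)}_C$ from $0<k<r$), then invoke the decomposition of Proposition \ref{proposition:main_exact} and the strict convexity with unique minimizer $t^{(i)}_R$ from Theorem \ref{theorem:main_exact}. The only difference is that you spell out the arithmetic and the strictness argument that the paper leaves implicit.
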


\begin{proof}[\textbf{\upshape Proof:}]
Because $r \geq 2$ and $0 < k < r$, we have $t^{(1)}_J < t^{(1)}_C = t^{(1)}_R$ and $t^{(2)}_R < t^{(2)}_C = t^{(2)}_J$.
Each $R^{ (i) \mu, \nu} \left( t^{(i)} \right)$ takes its minimum at $t^{(i)} = t^{(i)}_R$ (see Theorem \ref{theorem:main_exact}).
\end{proof}

Proposition \ref{proposition:main_exact_h2} shows that the Bayesian predictive distribution $\delta^{\mu, \nu}_R$ based on our prior distribution $\pi_R \left( \phi \right) d\phi$ dominates the Bayesian predictive distribution $\delta^{\mu, \nu}_C$ based on the reference prior distribution $\pi_C \left( \phi \right) d\phi$.
This is because the use of prior distribution $\pi_R \left( \phi \right) d\phi$ improves the performance on the space $\Phi^{(2)}$ of the nuisance parameter $\phi^{(2)}$ compared to the use of the reference prior distribution $\pi_C \left( \phi \right) d\phi$.

Subsequently, we consider the difference in risks $R^{\mu, \nu} \left( \mathbf{t} \right)$ and $R^{\mu, \nu}_{J} = R^{\mu, \nu} \left( \mathbf{t}_J \right)$.
The subset $T^{\mu}$ is defined as
\begin{align*}
  T^{\mu}
  :=
  \left\{
    \mathbf{t} = \left( t^{(1)}, t^{(1)} \right) \in T = \mathbb{R}^2
  \;\middle|\;
    t^{(1)} > - \mu - 1 \;,\; t^{(2)} >  - \mu - \frac{k}{2}d - 1
  \right\}
\end{align*}
of the set $T := \mathbb{R}^2$ for $\mu > \left( r - 1 \right) d / 2$.
If $\mathbf{t} \in T^{\mu}$, then the posterior distribution $\pi^{\mu}_{\mathbf{t}} \left( \phi \mid x \right) d\phi$ of the prior distribution (\ref{definition:relatively_invariant_priors_wishart}) given an observation $x$ of $X \sim W \left( \mu, \phi \right)$ is proper.
The normalized risk difference $\mathrm{NRD}^{\mu, \nu} \left( \mathbf{t} \right)$ is defined as
\begin{align}
  \mathrm{NRD}^{\mu, \nu} \left( \mathbf{t} \right)
  := \frac{\mu^2}{\nu} \left( R^{\mu, \nu} \left( \mathbf{t} \right) - R^{\mu, \nu}_{J} \right)
  \label{definition:normalized_risk_difference}
\end{align}
for $\mathbf{t} \in T^{\mu}$.
The subset $D^{\mu, \nu}$ of the set $T$ is given by
\begin{align*}
  D^{\mu, \nu}
  :=
  \left\{
    \mathbf{t} \in T^{\mu}  
  \;\middle|\;
    \mathrm{NRD}^{\mu, \nu} \left( \mathbf{t} \right) < 0
  \right\}
\end{align*}
for $\mu > \left( r - 1 \right) d / 2$ and $\nu > \left( r - 1 \right) d / 2$.
The risk $R^{\mu, \nu} \left( \mathbf{t} \right)$ is a strictly convex function of $\mathbf{t}$ on $T^{\mu}$;
thus, the subset $D^{\mu, \nu}$ is a convex set in $T$.

First, consider the behaviors of the normalized risk difference $\mathrm{NRD}^{\mu, \nu} \left( \mathbf{t} \right)$ and the subset $D^{\mu, \nu}$ of $T$ when the size $\mu$ of the observation is large.
Based on Theorem \ref{theorem:main_asymptotic}, the asymptotic expansion is given as
\begin{align}
  \mathrm{NRD}^{\mu, \nu} \left( t \right)
  &=
  \frac{k}{2} \left( t^{(1)} - t^{(1)}_R \right)^2 + \frac{r-k}{2} \left( t^{(2)} - t^{(2)}_R \right)^2
  - \left( R^{\mu, \nu}_J - R^{\mu, \nu}_R \right)
  + O \left( \mu^{-1} \right)
  \nonumber\\
  &=
  \frac{k}{2} \left( t^{(1)} + \left( \frac{ k - 1 }{2} d + 1 \right) \right)^2 + \frac{r-k}{2} \left( t^{(2)} + \left( \frac{ r + k - 1 }{2} d + 1 \right) \right)^2
  - \frac{d^2}{8} r k \left( r - k \right)
  + O \left( \mu^{-1} \right)
  \label{formula:NRD_asymptotic_expansion_infinity}
\end{align}
of the normalized risk difference (\ref{definition:normalized_risk_difference}) and the limit
\begin{align*}
  D^{\infty}
  :=
  \lim_{\mu \to \infty} D^{\mu, \nu}
  =
  \left\{
    \mathbf{t} \in T
  \;\middle|\;
    \frac{k}{2} \left( t^{(1)} - t^{(1)}_R \right)^2 + \frac{r-k}{2} \left( t^{(2)} - t^{(2)}_R \right)^2
    - \frac{d^2}{8} r k \left( r - k \right) < 0
  \right\}
\end{align*}
of the sequence of subsets $D^{\mu, \nu}$ of $T$ as $\mu \to \infty$.
Note that the subset $D^{\infty}$ of $T$ does not depend on the value of $\nu$.
The region $D^{\infty}$ is an oval in plane $T$ centered at point $\mathbf{t}_R$.
Point $\mathbf{t}_J$ is located at the boundary $\partial D^{\infty}$ of set $D^{\infty}$,
and point $\mathbf{t}_C$ is located in set $D^{\infty}$.

Next, consider the case when the size $\mu$ of the observation is small.
We have
\begin{align*}
  T^{\frac{r-1}{2}d}
  :=
  \lim_{\mu \to \frac{r-1}{2}d + 0} T^{\mu}
  =
  \left\{
    \mathbf{t} = \left( t^{(1)}, t^{(2)} \right) \in T = \mathbb{R}^2
  \;\middle|\;
    t^{(1)} > t_J^{(1)}
    \;,\;
    t^{(2)} > t_R^{(2)}
  \right\}
\end{align*}
and
\begin{align*}
  \lim_{\mu \to \frac{r-1}{2}d + 0} \mathrm{NRD} \left( t \right)
  =
  \begin{cases}
    + \infty &\quad\text{if $t^{(2)} \geq t^{(2)}_J$} \\
    - \infty &\quad\text{if $t^{(2)} < t^{(2)}_J$}
  \end{cases}
\end{align*}
for $t \in T^{\frac{r-1}{2}d}$.
The limit is set to
\begin{align*}
  D^{\frac{r-1}{2}d}
  :=
  \lim_{\mu \to \frac{r-1}{2}d + 0} D^{\mu, \nu}
  = \left\{
    \mathbf{t} \in T
  \;\middle|\;
    t^{(1)}_J < t^{(1)}
    \;,\;
    t^{(1)}_R < t^{(2)} < t^{(1)}_J
    \right\}
\end{align*}
of the sequence of the subsets $D^{\mu, \nu}$ of $T$ as $\mu \to \left( r - 1 \right) d / 2 + 0$.
Note that the subset $D^{\frac{r-1}{2}d}$ of $T$ does not depend on the value of $\nu$.
The three points $\mathbf{t}_J$, $\mathbf{t}_C$, and $\mathbf{t}_R$ are located at the boundary $\partial D^{\frac{r-1}{2}d}$ of the set $D^{\frac{r-1}{2}d}$.
Here, the region $D^{\frac{r-1}{2}d}$ is very different from the region $D^{\infty}$.

The convex region $D^{\mu, \nu}$ in $T$ gradually changes its shape from the rectangle $D^{\frac{r-1}{2}d}$ to the oval $D^{\infty}$ as the value of $\mu$ increases from $\left( r - 1 \right) d / 2$ to $+\infty$,
indicating that its dominance over the Jeffreys prior distribution depends on the value of $\mu$.
The asymptotic expansion (\ref{formula:NRD_asymptotic_expansion_infinity}) does not necessarily explain the behavior of the risk when the value of $\mu$ is small.
The set $D^{\mu, \nu}$ contains all the points $\mathbf{t} = \left( t^{(1)}, t^{(2)} \right)$ such that $t^{(1)}_J < t^{(1)} < t^{(1)}_R$ and $t^{(2)}_R < t^{(2)} < t^{(2)}_J$.
\figurename \ref{figure:T} shows the placement of the three points $\mathbf{t}_J$, $\mathbf{t}_C$, and $\mathbf{t}_R$ and the regions $D^{\frac{r-1}{2}d}$ and $D^{\infty}$ for $(d, r, k) = (1, 2, 1)$.
\begin{figure}[htbp]
  \centering
  \includegraphics[width=0.75\columnwidth]{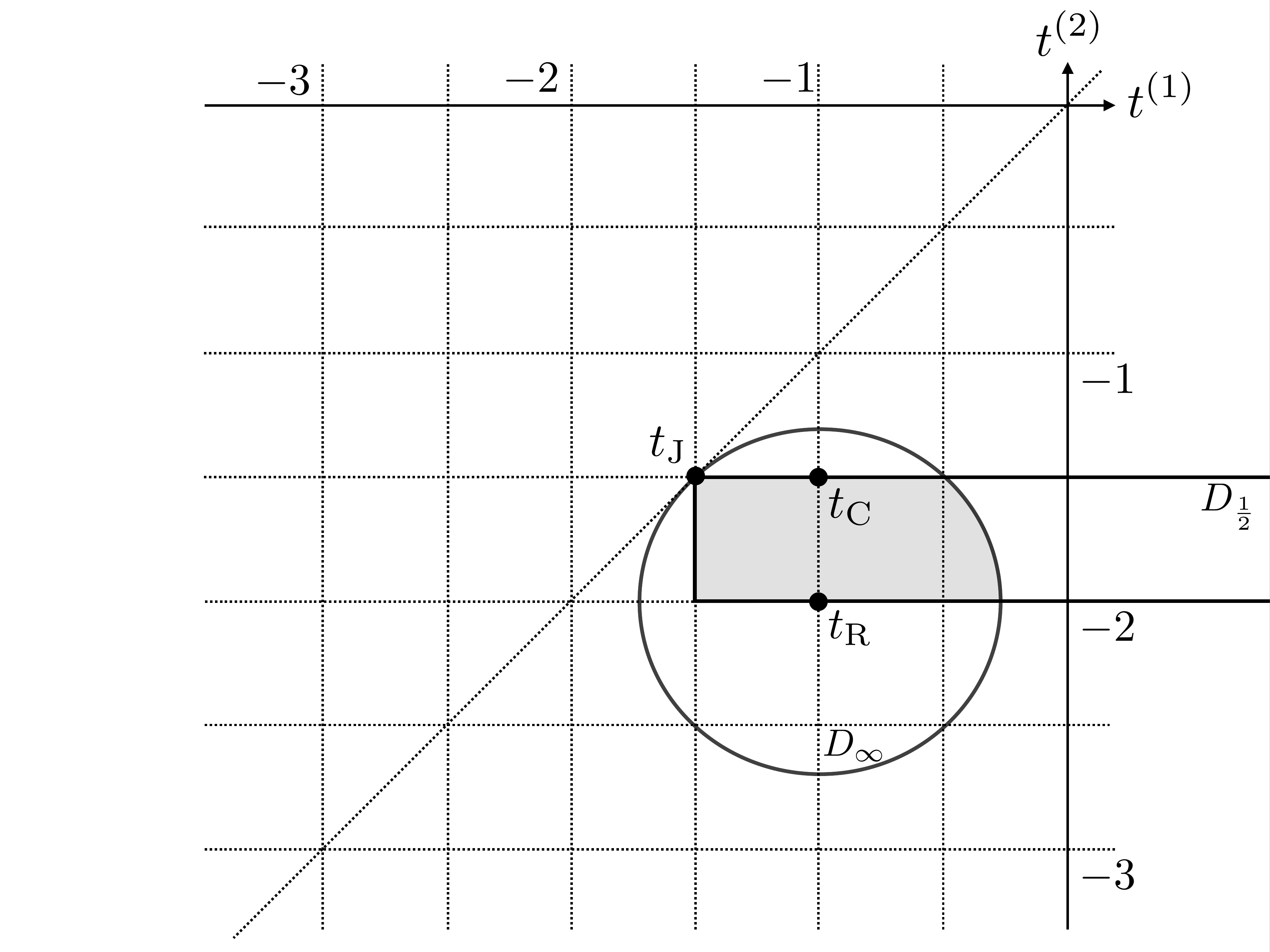}
  \caption{The placement of $\mathbf{t}_J$, $\mathbf{t}_C$ $\mathbf{t}_R$, $D^{\frac{r-1}{2}d}$, and $D^{\infty}$ for $(d, r, k) = (1, 2, 1)$ in the space $T = \mathbb{R}^2$. The region $D^{\frac{1}{2}} \cap D^{\infty}$ is shaded.}
  \label{figure:T}
\end{figure}

The subset $V^{\nu}$ is defined as
\begin{align*}
  V^{\nu} := \bigcap_{\mu > \frac{r-1}{2}d } D^{\mu, \nu}
\end{align*}
of set $T$ for each $\nu > \left( r - 1 \right) d / 2$.
Each set $D^{\mu, \nu}$ is convex in $T$;
therefore, the set $V^{\nu}$ is also convex in $T$.
If $\mathbf{t} \in V^{\nu}$, then the corresponding prior distribution $\pi_{\mathbf{t}} \left( \phi \right) d\phi$ dominates the Jeffreys prior distribution for any $\mu > \left( r - 1 \right) d / 2$.
We have $V^{\nu} \subset D^{\frac{r-1}{2}d} \cap D^{\infty}$ for $\nu > \left( r - 1 \right) d / 2$.
We suspect that the equality $V^{\nu} = D^{\frac{r-1}{2}d} \cap D^{\infty}$ holds for any $\nu > \left( r - 1 \right) d / 2$.
However, we cannot mathematically prove this equality.
If this equality holds, then the set $V^{\nu}$ does not depend on the value of $\nu$.
The region $D^{\frac{r-1}{2}d} \cap D^{\infty}$ for case $(d, r, k) = (1, 2, 1)$ is shaded in \figurename \ref{figure:T}.

We consider the case $(d, r, k) = (1, 2, 1)$.
Recall that the normalized risk $\mathrm{NR}^{\mu, \nu} \left( \mathbf{t} \right)$ is defined as (\ref{definition:NR}) and is $3/2 + O \left( \mu^{-1} \right)$.
Solid lines in \figurename \ref{figure:t_nu100_exact} show the theoretically expected values of normalized risks for $\nu = 1$,
while dashed lines present the lower terms up to $O \left( \mu^{-1} \right)$.
Recall that these values are constant independent of the value of the true parameter $\xi \in S^{+}_2 \left( \mathbb{R} \right)$.
We observe that the $O \left( \mu^{-1} \right)$ terms of the normalized risks do not necessarily explain the behaviors of the risks when the value of $\mu$ is small.
\figurename \ref{figure:t_nu100} shows the results of numerical simulations concerning the values of the normalized risks for the randomly generated value of
\begin{align*}
  \xi =
  \begin{bmatrix}
    3.583614 & 2.408764 \\
    2.408764 & 4.671542
  \end{bmatrix}
  \in S^{+}_2 \left( \mathbb{R} \right)
\end{align*}
for $(d, r, k) = (1, 2, 1)$ and $\nu = 1$.
Monte Carlo simulation was used to evaluate the expectation (\ref{definition:R}) over $X \sim W_2 \left( \mu, \xi \right)$ and $Y \sim W_2 \left( \nu, \xi \right)$.
\figurename \ref{figure:d} shows the normalized risks (\ref{definition:NR}) of $\mathbf{t} \in T^{\mu}$ for $\mu \in \{ 1/2 + 10^{-3} \,,\, 3/4 \,,\, 1 \,,\, 100 \}$ and $\nu = 1$ for the family $\{ W_2 \left( \mu, \xi \right) \}$ of real Wishart distributions; compare them with \figurename \ref{figure:T}.
The region $D^{\mu, \nu}$, whose normalized risk $\mathrm{NR}^{\mu, \nu} \left( \mathbf{t} \right)$ is less than that of $\mathbf{t} = \mathbf{t}_J$, is colored in blue.
From the experiments, the equality $V^{\nu} = D^{\frac{r-1}{2}d} \cap D^{\infty}$ appears to hold for $(d, r, k) = (1, 2, 1)$ and $\nu = 1$.

\begin{figure}[htbp]
  \begin{tabular}{lr}
    \begin{minipage}[t]{0.475\hsize}
      \centering
      \includegraphics[width=1.0\columnwidth]{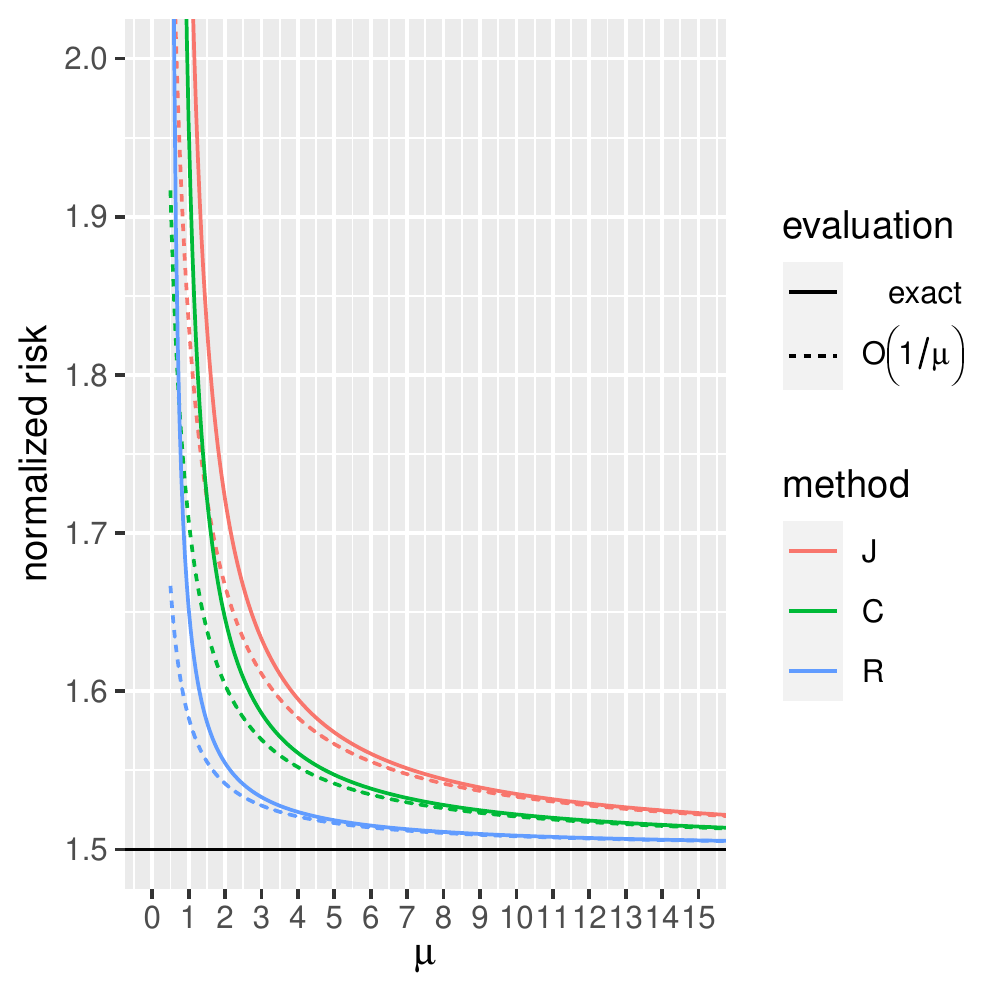}
      \subcaption{Theoretically expected values of normalized risks. Dashed lines show the terms up to $O \left( \mu^{-1} \right)$ of the exact evaluations.}
      \label{figure:t_nu100_exact}
    \end{minipage} &
    \begin{minipage}[t]{0.475\hsize}
      \centering
      \includegraphics[width=1.0\columnwidth]{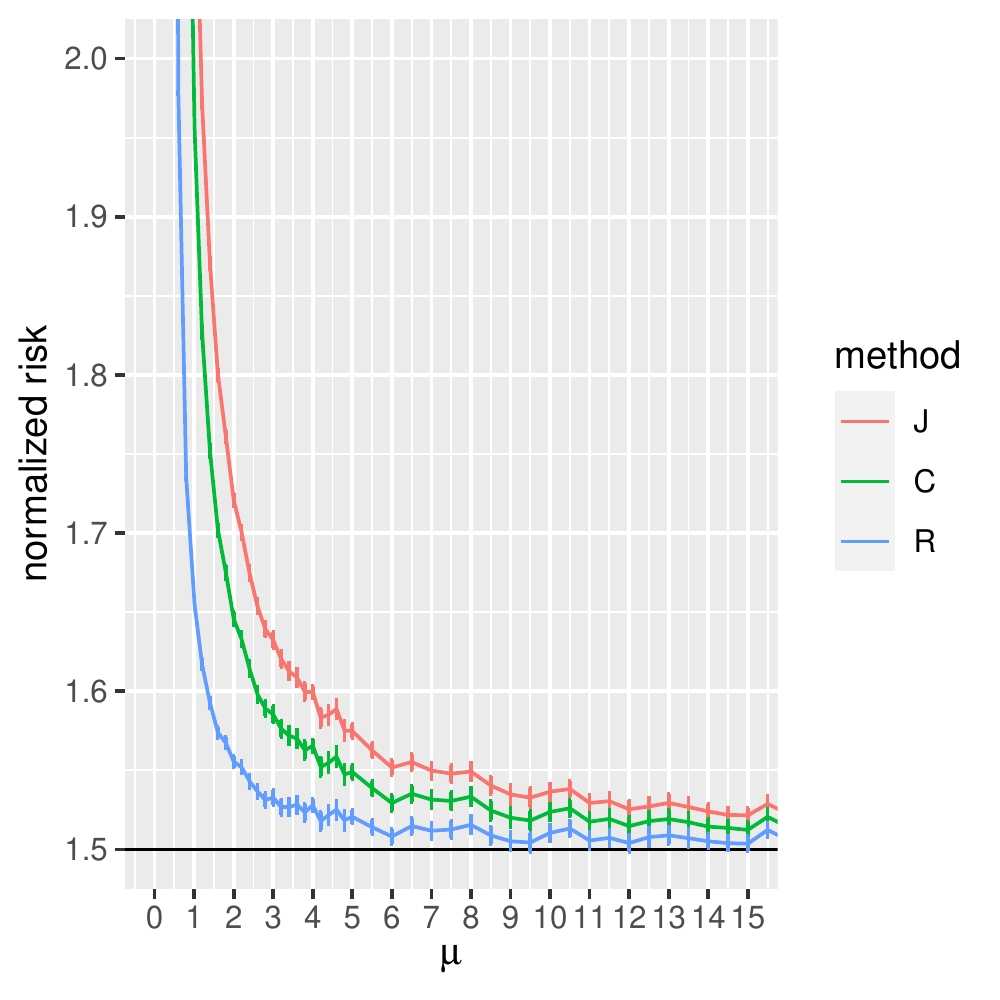}
      \subcaption{Numerical experiments concerning the values of normalized risks for a specific value of $\xi$.}
      \label{figure:t_nu100}
      \end{minipage}
  \end{tabular}
  \caption{The comparison between normalized risks $\mathrm{NR}^{\mu,\nu} \left( \mathbf{t}_J \right)$, $\mathrm{NR}^{\mu,\nu} \left( \mathbf{t}_C \right)$, and $\mathrm{NR}^{\mu,\nu} \left( \mathbf{t}_R \right)$ for $(d, r, k) = (1, 2, 1)$ and $\nu = 1$. We see that these normalized risks are $3/2 + O \left( \mu^{-1} \right)$. We also see that $\mathrm{NR}^{\mu,\nu} \left( \mathbf{t}_R \right) < \mathrm{NR}^{\mu,\nu} \left( \mathbf{t}_C \right) < \mathrm{NR}^{\mu,\nu} \left( \mathbf{t}_J \right)$ for $\mu > 1/2$ (see Proposition \ref{proposition:main_exact_h2}).}
  \label{figure:c}
\end{figure}

\begin{figure}[h]
  \begin{tabular}{lr}
    \begin{minipage}[t]{0.475\hsize}
      \centering
      \includegraphics[width=1.0\columnwidth]{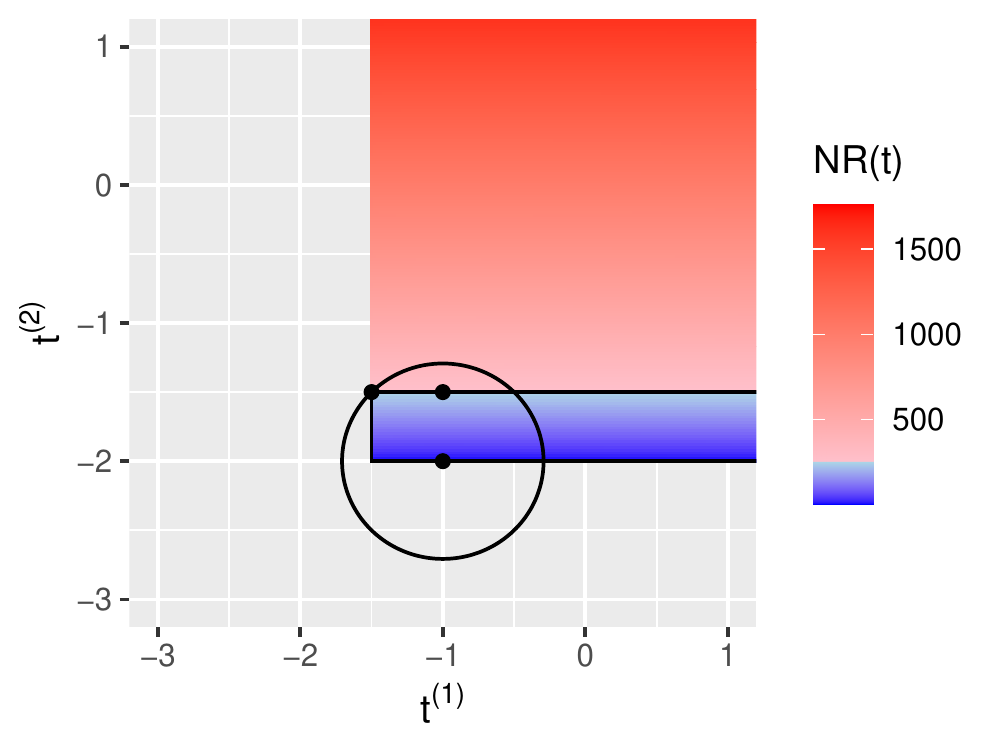}
      \subcaption{$(\mu, \nu) = (1/2 + 10^{-3}, 1)$}
      \label{figure:d_mu00050_n100}
    \end{minipage}
    &
    \begin{minipage}[t]{0.475\hsize}
      \centering
      \includegraphics[width=1.0\columnwidth]{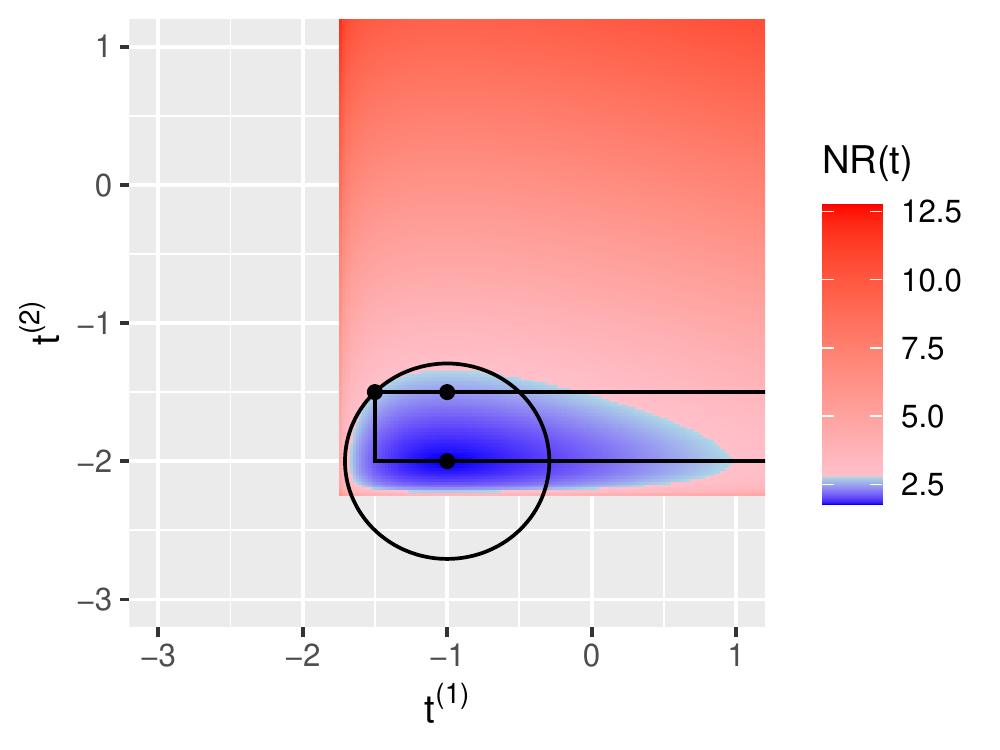}
      \subcaption{$(\mu, \nu) = (3/4, 1)$}
      \label{figure:d_mu00075_n100}
    \end{minipage}
    \\
    \\
    \begin{minipage}[t]{0.475\hsize}
      \centering     
      \includegraphics[width=1.0\columnwidth]{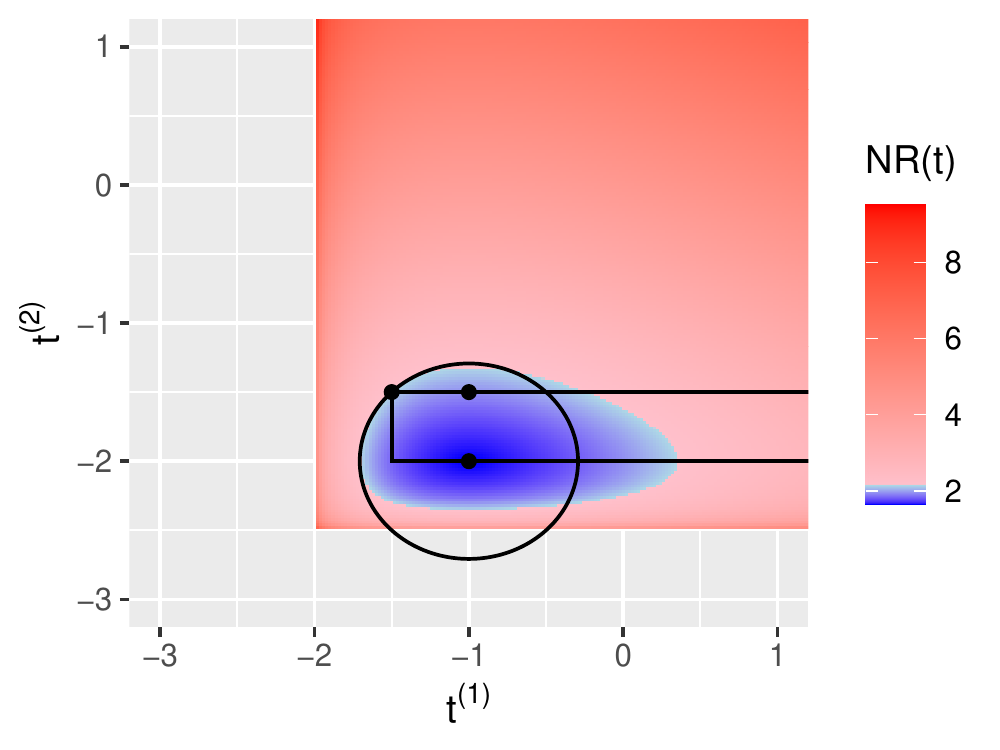}
      \subcaption{$(\mu, \nu) = (1, 1)$}
      \label{figure:d_mu00100_n100}
    \end{minipage}
    &
    \begin{minipage}[t]{0.475\hsize}
      \centering     
      \includegraphics[width=1.0\columnwidth]{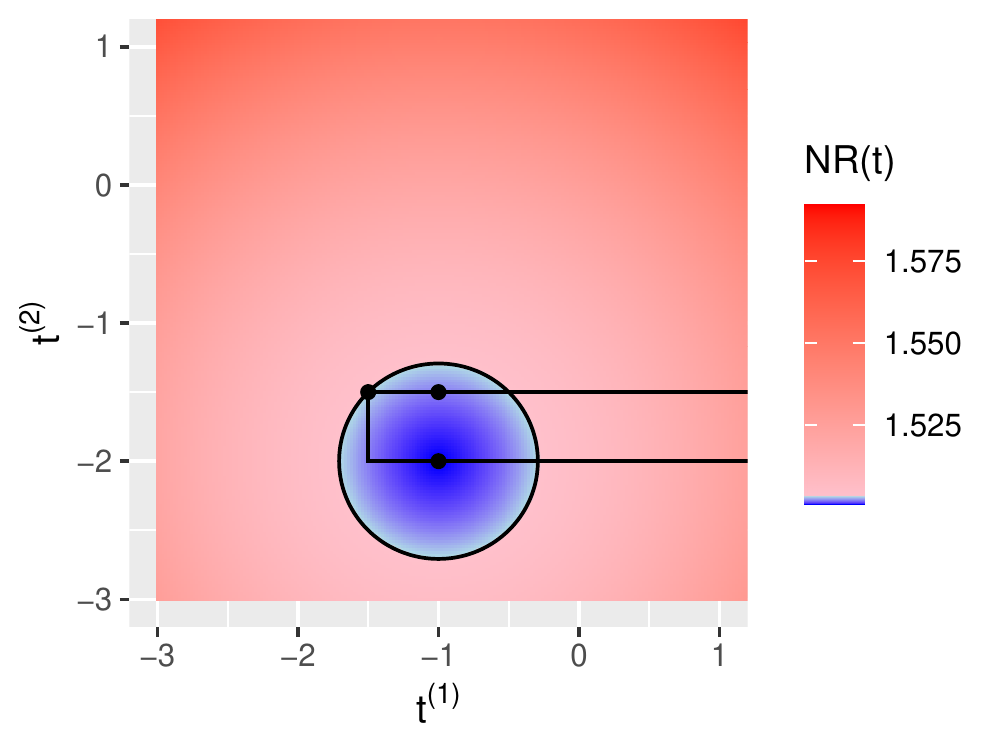}
      \subcaption{$(\mu, \nu) = (100, 1)$}
      \label{figure:d_mu10000_n100}
    \end{minipage}
  \end{tabular}
  \caption{The magnitude of normalized risks $\mathrm{NR}^{\mu,\nu} \left( \mathbf{t} \right)$ for $(d, r, k) = (1, 2, 1)$ and $\nu = 1$. The convex region $D^{\mu, \nu}$, which is colored in blue, gradually changes its shape from the rectangle $D^{\frac{1}{2}}$ to the oval $D^{\infty}$ as the value of $\mu$ increases from $1/2$ to $+\infty$. The equality $V = D^{\frac{1}{2}} \cap D^{\infty}$ appears to hold.}
  \label{figure:d}
\end{figure}

\section{Relative invariance under upper-triangular block matrices}
\label{section:relative_invariance}

Another interpretation of Theorem \ref{theorem:main_exact} is given.
The relative invariance under the left action of a group on a parameter space is of great importance in statistics (see \cite{hartigan1964invariant, eaton1983multivariate} for example).
We show that prior distributions (\ref{definition:relatively_invariant_priors_wishart}) are relatively invariant under the left action of the group of upper-triangular block matrices.
We define the group $G_{\mathbf{k}}$ of upper-triangular block matrices as the subgroup
\begin{align*}
  G_{\mathbf{k}}
  :=
  \left\{
    g \in G_r
    \;\middle|\;
    \text{
      $g$ is of the form $\begin{bmatrix} g^{(1)} & * & * \\ 0 & \ddots & * \\0 & 0 & g^{(h)} \end{bmatrix}$, $g^{(k)} \in G_{k^{(i)}}$.
    }
  \right\}
\end{align*}
of the general linear group $G_r$ of rank $r$ for $\mathbf{k} = \left( k^{(1)}, \ldots, k^{(h)} \right)$ with $r = \sum_{i=1}^h k^{(i)}$.
The group $G_{\mathbf{k}}$ defines the left action on the parameter space $\Xi \cong \Phi$ as $g \cdot \xi := g \, \xi \, g^*$ for $g \in G_{\mathbf{k}}$ and $\xi \in \Xi$.
The parameter $\xi^{(i)}_0$ in the parameter space $\Phi^{(i)}$ transforms as $\xi^{(i)}_0 \mapsto g^{(i)} \, \xi^{(i)}_0 \, { g^{(i)} }^*$ for each $i \in \{ 1, \ldots, h \}$.
The determinant of the Jacobian of this transformation in the space $E^+_{k^{(i)}}$ is calculated as $\left| g^{(i)} \right|^{2 \left( \left( k^{(i)} - 1 \right) d / 2 + 1 \right)}$
(see Proposition 5.11 in \cite{eaton1983multivariate} for the case $d=1$, and Proposition III.4.2 in \cite{faraut1994analysis} for the general case).
The measure (\ref{definition:relatively_invariant_priors_wishart_i}) transforms as
\begin{align*}
  \left| \xi_0^{(i)} \right|^{t^{(i)}} d\xi^{(i)}_{1/2} d\xi^{(i)}_0
  &\mapsto
  \left| g^{(i)} \right|^{ 2 t^{(i)} } \left| \xi_0^{(i)} \right|^{t^{(i)}}
  \times
  \left( \prod_{j=1}^{i-1} \left|  g^{(j)} \right| \right)^{ k^{(i)} d } \left| g^{(i)} \right|^{ r_{(i-1)} d } d\xi^{(i)}_{1/2}
  \times
  \left| g^{(i)} \right|^{ 2 \left( \frac{ \left( k^{(i)} - 1 \right) d }{ 2 } + 1 \right) } d\xi^{(i)}_0
\end{align*}
for each $i \in \{ 1, \ldots, h \}$, where $r_{(-1)}$ is interpreted as zero.
Therefore, the measure (\ref{definition:relatively_invariant_priors_wishart}) is transformed as $\pi_{\mathbf{t}} \left( \phi \right) d\phi \mapsto \chi_{\mathbf{t}} \left( g \right) \times \pi_{\mathbf{t}} \left( \phi \right) d\phi$,
where the multiplier is calculated as
\begin{align*}
  \chi_{\mathbf{t}} \left( g \right)
  :=
  \prod_{i=1}^h
  \left| g^{(i)} \right|^{ 2 \left( t^{(i)} - t^{(i)}_J \right) }
  =
  \prod_{i=1}^h
  \left| g^{(i)} \right|^{ 2 \left( t^{(i)} + \frac{ n }{ r } \right) }
\end{align*}
for $g \in G_{\mathbf{k}}$.
Therefore, prior distributions (\ref{definition:relatively_invariant_priors_wishart}) are relatively invariant under the left action of group $G_{\mathbf{k}}$ of the upper-triangular block matrices.

The fact that the prior distribution (\ref{definition:relatively_invariant_priors_wishart}) is relatively invariant under the left action of group $G_{\mathbf{k}}$ is consistent with Proposition \ref{proposition:main_exact}.
This is because the risk of the Bayesian predictive distribution based on a relatively invariant prior distribution is constant (see Theorem 9.3.8 in \cite{robert2007bayesian}).

Among the relatively invariant prior distributions, the left and right invariant prior distributions are important (see \cite{eaton1983multivariate}).
The direct computation shows that the left invariant prior distribution is the Jeffreys prior distribution $\pi_J \left( \phi \right) d\phi$, and the right invariant prior distribution is our prior distribution $\pi_R \left( \phi \right) d\phi$ (see Example 6.14 in \cite{eaton1983multivariate} for the case $d=1$).
The fact that the prior distribution $\pi_R \left( \phi \right) d\phi$ is the right invariant prior distribution under the left action of group $G_{\mathbf{k}}$ is consistent with Theorem \ref{theorem:main_exact}.
This is because the risk of the Bayesian predictive distribution based on the right invariant prior distribution is the minimum among the risks of Bayesian predictive distributions based on relatively invariant prior distributions (see Theorem 9.4.4 in \cite{robert2007bayesian}).

We discuss the minimaxity and admissibility of the Bayesian predictive distribution $\delta^{\mu, \nu}_R$.
First, we consider the minimaxity.
The group $G_{\mathbf{k}}$ is known to be amenable if $h = r$ (see \cite{bondar1981amenability}).
Thus, the Bayesian predictive distribution $\delta^{\mu, \nu}_R$ based on our prior distribution $\pi_R \left( \phi \right) d\phi$ is minimized if $h = r$ (i.e., $\mathbf{k} = \left( 1, \ldots, 1 \right)$), by the Hunt-Stein theorem (see Theorem 9.5.5 in \cite{robert2007bayesian}).
We observe that the finer the partition $\mathbf{k} = \left( k^{(1)}, \ldots, k^{(h)} \right)$ of rank $r$, the lower the risk $R^{\mu, \nu}_R$ of the Bayesian predictive distribution $\delta^{\mu, \nu}_R$ based on our prior distribution $\pi_R \left( \phi \right) d\phi$ with respect to the partition $\mathbf{k}$.
Therefore, the Bayesian predictive distribution $\delta^{\mu, \nu}_R$ based on our prior distribution $\pi_R \left( \phi \right) d\phi$ is minimax if and only if $h = r$.
Subsequently, we consider the admissibility.
If $r  =2$, there exists a prior distribution that dominates the prior distribution $\pi_R \left( \phi \right) d\phi$ (see Theorem 4 in \cite{komaki2009bayesian}).
Therefore, the Bayesian predictive distribution $\delta^{\mu, \nu}_R$ based on our prior distribution $\pi_R \left( \phi \right) d\phi$ is not admissible if $r \geq 2$.
This is because we may consider the partition $\mathbf{k} := \left( 2, 1, \ldots, 1 \right)$ of rank $r$ if $r \geq 2$, that is, $h = r-1$.

\appendix
\section*{Appendices}
\def\thesection{\Alph{section}}

\section{Laplace--Beltrami operator}
\label{section:LB}

We define the Laplace--Beltrami operator for the family $\{ W_r \left( \mu, \phi \right) \}_{\phi \in \Phi}$ of Wishart distributions.
In addition, we also prove Proposition \ref{proposition:main_asymptotic}.

The space $S_r \left( \mathbb{R} \right)$ of symmetric $r \times r$ matrices (if $d = 1$) or the space $H_r \left( \mathbb{C} \right)$ of Hermitian $r \times r$ matrices (if $d = 2$) is denoted as $E_r$.
We define the differential operator $\pd{\xi}$ as $\pd{\xi} \left\langle \xi \;\middle|\; x \right\rangle = x$ for $x \in E_r$.
The vectorization of $\pd{\xi}$ is denoted by $\cancel{\partial}_{\xi}$ (i.e., $\cancel{\partial}_{\xi}$ is a column vector of differential operators of length $n = r \left( r - 1 \right) d / 2 + r$, whereas $\pd{\xi}$ is an $r \times r$ matrix of differential operators).
Moreover, the transpose of the column vector $\cancel{\partial}_{\xi}$ is denote by ${\cancel{\partial}_{\xi}}^*$, that is,
${\cancel{\partial}_{\xi}}^*$ is a row vector of differential operators of length $n$.
For example, if $d = 1$ and $r = 2$, we have
$
  \pd{\xi}
  :=
  \begin{bmatrix}
    \frac{\partial}{\partial \xi_{11}} & \frac{1}{2} \frac{\partial}{\partial \xi_{12}} \\
    \frac{1}{2} \frac{\partial}{\partial \xi_{21}} & \frac{\partial}{\partial \xi_{22}}
  \end{bmatrix}
$,
$
  \cancel{\partial}_{\xi}
  :=
  \begin{bmatrix}
    \frac{\partial}{\partial \xi_{11}} \\
    \frac{\partial}{\partial \xi_{12}} \\
    \frac{\partial}{\partial \xi_{22}}
  \end{bmatrix}
$, and
$
  {\cancel{\partial}_{\xi}}^*
  :=
  \begin{bmatrix}
    \frac{\partial}{\partial \xi_{11}} &
    \frac{\partial}{\partial \xi_{12}} &
    \frac{\partial}{\partial \xi_{22}}
  \end{bmatrix}\,.
$
Note that we have $\frac{\partial}{\partial \xi_{12}} = \frac{\partial}{\partial \xi_{21}}$ for this case because $\xi_{12} = \xi_{21}$.
The Fisher information matrix of the family $\{ W_r \left( 1 , \xi\right) \}_{\xi \in \Xi}$ of Wishart distributions is denoted by $H \left( \xi \right)$, where we assume the unit of observation (i.e., $\mu = 1$).
We write $\overline{H} \left( \xi \right)$ if the Fisher information matrix $H \left( \xi \right)$ is represented with respect to the vectorization of the parameter $\xi \in \Xi$ (i.e.,
$H \left( \xi \right)$ is a linear operator from $E_r$ to $E_r$, whereas $\overline{H} \left( \xi \right)$ is an $n \times n$ matrix).

The Laplace--Beltrami operator $\LB$ is the differential operator defined as
\begin{align*}
  \LB K
  :=
  \left| \overline{H} \left( \xi \right) \right|^{-\frac{1}{2}}
  {\cancel{\partial}_{\xi}}^*
  \left(
    \left| \overline{H} \left( \xi \right) \right|^{\frac{1}{2}}
    \left( \overline{H} \left( \xi \right) \right)^{-1}
    {\cancel{\partial}_{\xi}}
    K
  \right)
\end{align*}
for a scalar function $K$ defined on the parameter space $\Xi$.
The Laplace--Beltrami operator does not depend on the choice of parameterizations (see \cite{jost2008riemannian} for the mathematical details).

We focus on the Laplace--Beltrami operator on the parameter space $\Phi^{(i)}$.
Recall that the Fisher information matrix $H \left( \phi \right)$ is of the form $\diag \left( H^{(1)} \left( \phi_{(1)} \right), \ldots, H^{(h)} \left( \phi_{(h)} \right) \right)$.
The Fisher information matrix $H_{(i)} \left( \phi_{(i)} \right)$ with respect to the parameter $\phi_{(i)} = \left( \phi_{(i-1)}, \phi^{(i)} \right) = \left( \zeta_{i-1}, \xi^{(i)}_{1/2}, \xi^{(i)}_0 \right)$ is calculated as
\begin{align*}
  H_{(i)} \left( \phi_{(i)} \right)
  = \begin{bmatrix}
    H^{(i)}_1 \left( \phi_{(i-1)} \right) & 0 & 0 \\
    0 & H^{(i)}_{0A} \left( \phi_{(i-1)}, \phi^{(i)} \right) & H^{(i)}_{0B} \left( \phi_{(i-1)}, \phi^{(i)} \right) \\
    0 & H^{(i)}_{0C} \left( \phi_{(i-1)}, \phi^{(i)} \right) & H^{(i)}_{0D} \left( \phi_{(i-1)}, \phi^{(i)} \right)
  \end{bmatrix}
  \,,
\end{align*}
where
\begin{align*}
  &
  H^{(i)}_1 \left( \phi_{(i-1)} \right) \colon
  w_{1} \mapsto \left( \zeta_{i-1} \right)^{-1} w_1 \left( \zeta_{i-1} \right)^{-1}
  \,,\;
  H^{(i)}_{0A} \left( \phi_{(i-1)}, \phi^{(i)} \right) \colon
  z_{1/2} \mapsto \frac{1}{2} \left( \zeta_{i-1} \right)^{-1} \, z_{1/2} \, \left( \xi^{(i)}_0 \right)^{-1}
  \,,\\
  &
  H^{(i)}_{0B} \left( \phi_{(i-1)}, \phi^{(i)} \right)  \colon
  z_{0} \mapsto - \left( \zeta_{i-1} \right)^{-1} \, \xi^{(i)}_{1/2} \, \left( \xi^{(i)}_0 \right)^{-1} \, z_0 \, \left( \xi^{(i)}_0 \right)^{-1}
  \,,\\
  &
  H^{(i)}_{0C} \left( \phi_{(i-1)}, \phi^{(i)} \right) \colon
  z_{1/2} \mapsto - \frac{1}{2} \left( \xi^{(i)}_0 \right)^{-1}
  \left(
    \left( z_{1/2} \right)^* \, \left( \zeta^{(i)}_1 \right)^{-1} \, \xi^{(i)}_{1/2}
    + \left( \xi^{(i)}_{1/2} \right)^* \, \left( \zeta_{i-1} \right)^{-1} \, z_{1/2}
  \right)
  \left( \xi^{(i)}_0 \right)^{-1}
  \,,\\
  &
  H^{(i)}_{0D} \left( \phi_{(i-1)}, \phi^{(i)} \right)
  z_{0} \mapsto \left( \xi^{(i)}_0 \right)^{-1} \left( z_0 + \left( \xi^{(i)}_{1/2} \right)^* \, \left( \zeta_{i-1} \right)^{-1} \, \xi^{(i)}_{1/2} \, \left( \xi^{(i)}_0 \right)^{-1} \, z_0
  + z_0 \, \left( \xi^{(i)}_0 \right)^{-1} \, \left( \xi^{(i)}_{1/2} \right)^* \, \left( \zeta_{i-1} \right)^{-1} \, \xi^{(i)}_{1/2} \right) \left( \xi^{(i)}_0 \right)^{-1}
\end{align*}
for $w = (w_1, z_{1/2}, z_{0}) \in E_{r_{(i-1)}} \times \Phi^{(i)} = \Phi_{(i)}$,
and the parameter $\phi_{(i-1)}$ is identified with the parameter $\zeta_{i-1} \in E_{r_{(i-1)}}$.
The inverse $V_{(i)} \left( \phi_{(i)} \right)$ of the Fisher information matrix $H_{(i)} \left( \phi_{(i)} \right)$ is calculated as
\begin{align*}
  V_{(i)} \left( \phi_{(i)} \right)
  = \begin{bmatrix}
    V^{(i)}_1 \left( \phi_{(i-1)} \right) & 0 & 0 \\
    0 & V^{(i)}_{0A} \left( \phi_{(i-1)}, \phi^{(i)} \right) & V^{(i)}_{0B} \left( \phi_{(i-1)}, \phi^{(i)} \right) \\
    0 & V^{(i)}_{0C} \left( \phi_{(i-1)}, \phi^{(i)} \right) & V^{(i)}_{0D} \left( \phi_{(i-1)}, \phi^{(i)} \right)
  \end{bmatrix}
  \,,
\end{align*}
where
\begin{align*}
  &
  V^{(i)}_1 \left( \phi_{(i-1)} \right) \colon
  w_{1} \mapsto \zeta_{i-1} \, w_1 \, \zeta_{i-1}
  \,,\\
  &
  V^{(i)}_{0A} \left( \phi_{(i-1)}, \phi^{(i)} \right) \colon
  z_{1/2} \mapsto 2 \, \zeta_{i-1} \left( z_{1/2} \right) \xi^{(i)}_0 + 2 \, \xi^{(i)}_{1/2} \left( z_{1/2} \right)^{*} \xi^{(i)}_{1/2}.
  + 2 \, \xi^{(i)}_{1/2} \left( \xi^{(i)}_0 \right)^{-1} \left( \xi^{(i)}_{1/2} \right)^* z_{1/2} \, \xi^{(i)}_0
  \,,\\
  &
  V^{(i)}_{0B} \left( \phi_{(i-1)}, \phi^{(i)} \right) \colon
  z_{0} \mapsto 2 \, \xi^{(i)}_{1/2} \, z_0 \, \xi^{(i)}_0
  \,,\;
  V^{(i)}_{0C} \left( \phi_{(i-1)}, \phi^{(i)} \right) \colon
  z_{1/2} \mapsto \xi^{(i)}_0 \left( z_{1/2} \right)^* \, \xi^{(i)}_{1/2}  + \left( \xi^{(i)}_{1/2} \right)^* z_{1/2} \, \xi^{(i)}_{0}
  \,,\\
  &
  V^{(i)}_{0D} \left( \phi_{(i-1)}, \phi^{(i)} \right)
  \colon z_{0} \mapsto \xi^{(i)}_0 \, z_0 \, \xi^{(i)}_0
\end{align*}
for $w = (w_1, z_{1/2}, z_{0}) \in E_{r_{(i-1)}} \times \Phi^{(i)} = \Phi_{(i)}$.
We write $\overline{V} \left( \xi \right)$ if the inverse $V \left( \xi \right)$ of the Fisher information matrix $H \left( \xi \right)$ is represented with respect to the vectorization of the parameter $\xi \in \Xi$ (i.e., the $n \times n$ matrix $\overline{V} \left( \xi \right)$ is the inverse of the $n \times n$ matrix $\overline{H} \left( \xi \right)$).
Similarly, we define the matrices $\overline{V}^{(i)}_1$, $\overline{V}^{(i)}_{0A}$, $\overline{V}^{(i)}_{0B}$, $\overline{V}^{(i)}_{0C}$, and $\overline{V}^{(i)}_{0D}$ with respect to the vectorization of the parameter $\phi_{(i)} = \left( \zeta_{i-1}, \xi^{(i)}_{1/2}, \xi^{(i)}_0 \right)$.

\begin{proof}[\textbf{\upshape Proof of Proposition \ref{proposition:main_asymptotic}:}]
We calculate the eigenvalue of the Laplace--Beltrami on the parameter space $\Phi^{(i)}$ as follows. The scalar function (\ref{definition:K_general}) with $t^{(1)} = \cdots = t^{(i-1)} = t$ and $t^{(i+1)} = \cdots = t^{(h)} = - n / r$ is considered.
Then, we have
\begin{align}
  \left. \left( \LB K_{ \mathbf{t} } \right) \middle\slash K_{ \mathbf{t} } \right.
  &=
  \left.
    \left(
      \left| \overline{H} \right|^{-\frac{1}{2}}
      \cancel{\partial}_{\zeta_{i-1}}^{\;*}
      \left(
        \left| \overline{H} \right|^{\frac{1}{2}}
        \overline{V}^{(i)}_1
        \,
        \cancel{\partial}_{\zeta_{i-1}}
        K_{ \mathbf{t}}
      \right)
    \right)
  \middle\slash
    K_{ \mathbf{t}}
  \right.
  \nonumber\\
  &\quad+
  \left.
    \left(
      \left| \overline{H} \right|^{-\frac{1}{2}}
      \cancel{\partial}_{\xi^{(i)}_{1/2}}^{\;*}
      \left(
        \left| \overline{H} \right|^{\frac{1}{2}}
        \overline{V}^{(i)}_{0A}
        \,
        \cancel{\partial}_{\xi^{(i)}_{1/2}}
        K_{ \mathbf{t}}
      \right)
    \right)
  \middle\slash
    K_{ \mathbf{t}}
  +
    \left(
      \left| \overline{H} \right|^{-\frac{1}{2}}
      \cancel{\partial}_{\xi^{(i)}_{1/2}}^{\;*}
      \left(
        \left| \overline{H} \right|^{\frac{1}{2}}
        \overline{V}^{(i)}_{0B}
        \,
        \cancel{\partial}_{\xi^{(i)}_{0}}
        K_{ \mathbf{t}}
      \right)
    \right)
  \middle\slash
    K_{ \mathbf{t}}
  \right.
  \nonumber\\
  &\quad+
  \left.
    \left(
      \left| \overline{H} \right|^{-\frac{1}{2}}
      \cancel{\partial}_{\xi^{(i)}_{0}}^{\;*}
      \left(
        \left| \overline{H} \right|^{\frac{1}{2}}
        \overline{V}^{(i)}_{0C}
        \,
        \cancel{\partial}_{\xi^{(i)}_{1/2}}
        K_{ \mathbf{t}}
      \right)
    \right)
  \middle\slash
    K_{ \mathbf{t}}
  +
    \left(
      \left| \overline{H} \right|^{-\frac{1}{2}}
      \cancel{\partial}_{\xi^{(i)}_0}^{\;*}
      \left(
        \left| \overline{H} \right|^{\frac{1}{2}}
        \overline{V}^{(i)}_{0D}
        \,
        \cancel{\partial}_{\xi^{(i)}_0}
        K_{ \mathbf{t}}
      \right)
    \right)
  \middle\slash
    K_{ \mathbf{t}}
  \right.
  \label{lemma:LB}
\end{align}
for $\mathbf{t} = \left( t, \ldots, t, t^{(i)}, - n / r, \ldots, - n / r \right) \in \mathbb{R}^h$,
where
\begin{align*}
  \left.
    \left(
      \left| \overline{H} \right|^{-\frac{1}{2}}
      \cancel{\partial}_{\zeta_{i-1}}^{\;*}
      \left(
        \left| \overline{H} \right|^{\frac{1}{2}}
        \overline{V}^{(i)}_1
        \,
        \cancel{\partial}_{\zeta_{i-1}}
        K_{ \mathbf{t}}
      \right)
    \right)
  \middle\slash
    K_{ \mathbf{t}}
  \right.
  &=
  r_{(i-1)}
  \left(
    \left( \frac{1}{2} \left( t + \frac{n}{r} \right) \right)^2
    -
    \left( \frac{n}{r} - \left( \frac{r_{(i-1)}}{2} d + 1 \right) \right)
    \left( \frac{1}{2} \left( t + \frac{n}{r} \right) \right)
  \right)
  \,,\\
  \left.
    \left(
      \left| \overline{H} \right|^{-\frac{1}{2}}
      \cancel{\partial}_{\xi^{(i)}_{1/2}}^{\;*}
      \left(
        \left| \overline{H} \right|^{\frac{1}{2}}
        \overline{V}^{(i)}_{0A}
        \,
        \cancel{\partial}_{\xi^{(i)}_{1/2}}
        K_{ \mathbf{t}}
      \right)
    \right)
  \middle\slash
    K_{ \mathbf{t}}
  \right.  
  &=
  0
  \,,\\
  \left.
    \left(
      \left| \overline{H} \right|^{-\frac{1}{2}}
      \cancel{\partial}_{\xi^{(i)}_{1/2}}^{\;*}
      \left(
        \left| \overline{H} \right|^{\frac{1}{2}}
        \overline{V}^{(i)}_{0B}
        \,
        \cancel{\partial}_{\xi^{(i)}_{0}}
        K_{ \mathbf{t}}
      \right)
    \right)
  \middle\slash
    K_{ \mathbf{t}}
  \right.
  &=
  r_{(i-1)} k^{(i)} d \left( \frac{1}{2} \left( t^{(i)} + \frac{n}{r} \right) \right)
  \,,\\
  \left.
    \left(
      \left| \overline{H} \right|^{-\frac{1}{2}}
      \cancel{\partial}_{\xi^{(i)}_{0}}^{\;*}
      \left(
        \left| \overline{H} \right|^{\frac{1}{2}}
        \overline{V}^{(i)}_{0C}
        \,
        \cancel{\partial}_{\xi^{(i)}_{1/2}}
        K_{ \mathbf{t}}
      \right)
    \right)
  \middle\slash
    K_{ \mathbf{t}}
  \right.
  &=
  0
  \,,\\
  \left.
    \left(
      \left| \overline{H} \right|^{-\frac{1}{2}}
      \cancel{\partial}_{\xi^{(i)}_0}^{\;*}
      \left(
        \left| \overline{H} \right|^{\frac{1}{2}}
        \overline{V}^{(i)}_{0D}
        \,
        \cancel{\partial}_{\xi^{(i)}_0}
        K_{ \mathbf{t}}
      \right)
    \right)
  \middle\slash
    K_{ \mathbf{t}}
  \right.
  &=
  k^{(i)}
  \left(
    \left( \frac{1}{2} \left( t^{(i)} + \frac{n}{r} \right) \right)^2
    -
    \left( \frac{n}{r} - \left( \frac{ k^{(i)} - 1 }{2} d + 1 \right) \right)
    \left( \frac{1}{2} \left( t^{(i)} + \frac{n}{r} \right) \right)
  \right)
  \,.
\end{align*}
Therefore, the terms that are relevant to $t^{(i)} \in \mathbb{R}$ in the eigenvalue (\ref{lemma:LB}) are summarized as
\begin{align*}
  &
  k^{(i)}
  \left(
    \left( \frac{1}{2} \left( t^{(i)} + \frac{n}{r} \right) \right)^2
    -
    \left( t_R^{(i)} + \frac{n}{r} \right) 
    \left( \frac{1}{2} \left( t^{(i)} + \frac{n}{r} \right) \right)
  \right)
  =
  \frac{k^{(i)}}{4} \left( t^{(i)} - t_R^{(i)} \right)^2
  -
  \frac{d^2}{16} k^{(i)}
  \left(
    r - 2 r_{(i)} + k^{(i)}
  \right)^2
  \,.
\end{align*}
\end{proof}

\section{Enriched standard conjugate prior distribution}
\label{section:ESCPD}

The closure (i.e., positive semi-definite matrices) of $E_r^+$ in $E_r$ is denoted by $\overline{E_r^+}$.
The family of enriched standard conjugate prior distributions for the family $\{ W_r \left( \mu, \phi \right) \}_{\phi \in \Phi}$ of Wishart distributions is defined as
\begin{align}
  \pi_{\mathbf{s}, \mathbf{t}} \left( \phi \right) d\phi
  :=
  \prod_{i=1}^h
  \pi^{(i)}_{s^{(i)}, t^{(i)}} \left( \phi^{(i)} \right) d\phi^{(i)} 
  \label{definition:enriched_standard_conjugate_prior}
\end{align}
for the hyperparameters $\mathbf{s} := \left( s^{(1)}, \ldots, s^{(h)} \right) \in E_{r_{(1)}} \times \cdots \times E_{r_{(h)}}$ and $\mathbf{t} := \left( t^{(1)}, \ldots, t^{(h)} \right) \in \mathbb{R}^h$.
Each prior distribution $\pi^{(i)}_{s^{(i)}, t^{(i)}} \left( \phi^{(i)} \right) d\phi^{(i)}$ on the parameter space $\Phi^{(i)}$ is defined as
\begin{align}
  &
  \pi^{(i)}_{s^{(i)}, t^{(i)}} \left( \phi^{(i)} \right) d\phi^{(i)} 
  = 
  \left| \xi^{(i)}_0 \right|^{ t^{(i)} }
  \exp
  \left(
    - \left\langle  \xi^{(i)}_{1/2} \, {\xi^{(i)}_0}^{-1} \, {\xi^{(i)}_{1/2}}^* \;\middle|\; s^{(2)}_1 \right\rangle
    - \left\langle \xi^{(i)}_{1/2} \;\middle|\; s^{(2)}_{1/2} \right\rangle
    - \left\langle \xi^{(i)}_0 \;\middle|\; s^{(2)}_0 \right\rangle
  \right)
  d\xi^{(i)}_{1/2}
  d\xi^{(i)}_0
  \label{definition:enriched_standard_conjugate_prior_i}
  \,,
\end{align}
where we write
$
  s^{(i)}
  =
  \begin{bmatrix}
    s^{(i)}_1 & s^{(i)}_{1/2} \\
    { s^{(i)}_{1/2} }^* & s^{(i)}_{0}
  \end{bmatrix}
  \in E_{r_{(i)}}
$
with respect to the partition $\left( r_{(i-1)}, k^{(i)} \right)$ of rank $r_{(i)}$ for $i \in \{ 1, \ldots, h \}$ (see \cite{consonni2003enriched}).
Note that $d\phi = \prod_{i=1}^h d\phi^{(i)}$ and $\left| \xi \right| = \prod_{i=1}^h \left| \xi^{(i)}_0 \right|$,
where $\xi^{(1)}_0$ is interpreted as $\zeta_{(1)}$.
The enriched standard conjugate prior distribution (\ref{definition:enriched_standard_conjugate_prior}) reduces to the usual standard conjugate prior distributions if $s^{(1)} = \cdots = s^{(h)} = 0$ and $t^{(1)} = \cdots = t^{(h)} = t$.
The prior distribution (\ref{definition:enriched_standard_conjugate_prior_i}) is proper if $s^{(i)} \in E_r^+$ and $t^{(i)} > - \left( \left( r_{(i)} - k^{(i)} \right) / 2 \right) d - 1$.

\begin{lemma}
  The normalization constant of the prior distribution $\pi^{(i)}_{s^{(i)}, t^{(i)}} \left( \phi^{(i)} \right) d\phi^{(i)}$ on the parameter space $\Phi^{(i)}$ is calculated as
  \begin{align}
    \int_{\Phi^{(i)}} \pi^{(i)}_{s^{(i)}, t^{(i)}} \left( \phi^{(i)} \right) \,d\phi^{(i)}
    &=
    \pi^{\frac{ r_{(i-1)} k^{(i)} }{ 2 } d } \,
    \Gamma_{k^{(i)}} \left( t^{(i)} + \frac{n_{(i)}}{r_{(i)}} \right)
    \left| s^{(i)}_1 \right|^{ t^{(i)} + \frac{n_{(i-1)}}{r_{(i-1)}} }
    \left| s^{(i)} \right|^{ - \left( t^{(i)} + \frac{n_{(i)}}{r_{(i)}} \right)}
    \label{lemma:normalization_constant}
  \end{align}
  if $s^{(i)} \in E_{r_{(i)}}^+$ and $t^{(i)} > - \left( \left( r_{(i)} - k^{(i)} \right) / 2 \right) d - 1$.
  If $i = 1$, then (\ref{lemma:normalization_constant}) is interpreted as
  \begin{align*}
    \int_{\Phi^{(1)}} \pi^{(1)}_{s^{(1)}, t^{(1)}} \left( \phi^{(1)} \right) \,d\phi^{(1)}
    &=
    \Gamma_{k^{(1)}} \left( t^{(1)} + \frac{n_{(1)}}{r_{(1)}} \right)
    \left| s^{(1)} \right|^{ - \left( t^{(1)} + \frac{n_{(1)}}{r_{(1)}} \right)}
  \end{align*}
  for $s^{(1)} \in E_{r_{(1)}}^+$ and $t^{(1)} > -1$.
\end{lemma}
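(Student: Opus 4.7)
The plan is to evaluate the integral by Fubini's theorem in the order ``first $\xi^{(i)}_{1/2}$, then $\xi^{(i)}_0$,'' turning it into a Gaussian integral followed by a multivariate gamma integral. The case $i=1$ is handled separately: since there is no off-diagonal block, the density reduces to $|\zeta_{(1)}|^{t^{(1)}}\exp(-\langle \zeta_{(1)}\mid s^{(1)}\rangle)\,d\zeta_{(1)}$, and the quadratic change of variable $\zeta_{(1)}\mapsto(s^{(1)})^{-1/2}\tilde\zeta(s^{(1)})^{-1/2}$ (with Jacobian $|s^{(1)}|^{-m^{(1)}/k^{(1)}}$ on $E_{k^{(1)}}^+$) reduces the integral directly to the defining integral of $\Gamma_{k^{(1)}}$.

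For $i \geq 2$, first I would fix $\xi^{(i)}_0\in E^+_{k^{(i)}}$ and integrate over $\xi^{(i)}_{1/2}$. After vectorization the exponent is a quadratic-plus-linear form whose precision is of Kronecker-product type $s^{(i)}_1\otimes(\xi^{(i)}_0)^{-1}$. Completing the square via the shift $\xi^{(i)}_{1/2}\mapsto \xi^{(i)}_{1/2}+\tfrac{1}{2}(s^{(i)}_1)^{-1}s^{(i)}_{1/2}\xi^{(i)}_0$ kills the linear term in $\xi^{(i)}_{1/2}$ and releases an extra $+\tfrac{1}{4}\langle \xi^{(i)}_0 \mid (s^{(i)}_{1/2})^*(s^{(i)}_1)^{-1}s^{(i)}_{1/2}\rangle$ into the exponent; combined with the original $-\langle \xi^{(i)}_0\mid s^{(i)}_0\rangle$ this produces $-\langle \xi^{(i)}_0\mid M\rangle$, where $M:=s^{(i)}_0-(s^{(i)}_{1/2})^*(s^{(i)}_1)^{-1}s^{(i)}_{1/2}$ is the Schur complement of $s^{(i)}_1$ in $s^{(i)}$. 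The Gaussian integration then yields the factor $\pi^{r_{(i-1)}k^{(i)}d/2}\,|\xi^{(i)}_0|^{r_{(i-1)}d/2}\,|s^{(i)}_1|^{-k^{(i)}d/2}$, obtained from the Kronecker-product determinant formula applied over the real underlying vector space of dimension $r_{(i-1)}k^{(i)}d$.

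Next I would integrate out $\xi^{(i)}_0$ against $|\xi^{(i)}_0|^{t^{(i)}+r_{(i-1)}d/2}\exp(-\langle \xi^{(i)}_0\mid M\rangle)\,d\xi^{(i)}_0$. Positivity of $s^{(i)}\in E^+_{r_{(i)}}$ guarantees $M\in E^+_{k^{(i)}}$, so the quadratic change of variable $\xi^{(i)}_0\mapsto M^{-1/2}\tilde\xi M^{-1/2}$ (Jacobian $|M|^{-m^{(i)}/k^{(i)}}$) converts this to the defining integral of $\Gamma_{k^{(i)}}$, producing $|M|^{-(t^{(i)}+r_{(i-1)}d/2+m^{(i)}/k^{(i)})}\Gamma_{k^{(i)}}\!\bigl(t^{(i)}+r_{(i-1)}d/2+m^{(i)}/k^{(i)}\bigr)$. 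A short identity using $m^{(i)}/k^{(i)}=1+(k^{(i)}-1)d/2$ shows $r_{(i-1)}d/2+m^{(i)}/k^{(i)}=n_{(i)}/r_{(i)}$, so the exponent of $|M|$ is $-(t^{(i)}+n_{(i)}/r_{(i)})$.

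To finish, I would invoke the Schur determinant identity $|s^{(i)}|=|s^{(i)}_1|\,|M|$ to rewrite $|M|^{-(t^{(i)}+n_{(i)}/r_{(i)})}$ as $|s^{(i)}_1|^{t^{(i)}+n_{(i)}/r_{(i)}}|s^{(i)}|^{-(t^{(i)}+n_{(i)}/r_{(i)})}$, and then absorb the $|s^{(i)}_1|^{-k^{(i)}d/2}$ factor from the Gaussian step via the identity $n_{(i)}/r_{(i)}-k^{(i)}d/2=n_{(i-1)}/r_{(i-1)}$ (immediate from $r_{(i)}-k^{(i)}=r_{(i-1)}$). The main obstacle is bookkeeping the Peirce invariant $d$ consistently through the Gaussian determinant and the Jordan-algebra change-of-variable Jacobians for the two Euclidean Jordan algebras $E_{k^{(i)}}$ and $E_{r_{(i-1)}}$; once these bookkeeping identities are verified, the desired formula emerges.
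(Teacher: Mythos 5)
Your overall route is exactly the paper's: apply Fubini, integrate the off-diagonal block $\xi^{(i)}_{1/2}$ first as a Gaussian integral (producing $\pi^{r_{(i-1)}k^{(i)}d/2}\,|\xi^{(i)}_0|^{r_{(i-1)}d/2}\,|s^{(i)}_1|^{-k^{(i)}d/2}$), then integrate $\xi^{(i)}_0$ against a $\Gamma_{k^{(i)}}$-type integral, and finish with the Schur determinant identity $|s^{(i)}|=|s^{(i)}_1|\,|M|$ together with the exponent identities $r_{(i-1)}d/2+m^{(i)}/k^{(i)}=n_{(i)}/r_{(i)}$ and $n_{(i)}/r_{(i)}-k^{(i)}d/2=n_{(i-1)}/r_{(i-1)}$. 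All of that, and your separate treatment of $i=1$, matches the paper's proof and is correct.

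There is, however, one concrete inconsistency in your completing-the-square step. You shift by $\tfrac{1}{2}(s^{(i)}_1)^{-1}s^{(i)}_{1/2}\,\xi^{(i)}_0$ and release $+\tfrac{1}{4}\langle \xi^{(i)}_0 \mid (s^{(i)}_{1/2})^*(s^{(i)}_1)^{-1}s^{(i)}_{1/2}\rangle$, yet you then claim the combined exponent is $-\langle \xi^{(i)}_0\mid M\rangle$ with $M=s^{(i)}_0-(s^{(i)}_{1/2})^*(s^{(i)}_1)^{-1}s^{(i)}_{1/2}$. With a released term of $\tfrac14$ you would instead get $M=s^{(i)}_0-\tfrac14(s^{(i)}_{1/2})^*(s^{(i)}_1)^{-1}s^{(i)}_{1/2}$, which is \emph{not} the Schur complement, and the identity $|s^{(i)}|=|s^{(i)}_1|\,|M|$ you rely on at the end would fail. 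The source of the discrepancy is the convention for $\langle \xi^{(i)}_{1/2}\mid s^{(i)}_{1/2}\rangle$: since this term arises from the block decomposition of $\tr(\xi s)$, it counts both off-diagonal blocks, i.e.\ $\langle \xi^{(i)}_{1/2}\mid s^{(i)}_{1/2}\rangle = \tr\bigl(\xi^{(i)}_{1/2}{s^{(i)}_{1/2}}^*\bigr)+\tr\bigl({\xi^{(i)}_{1/2}}^*s^{(i)}_{1/2}\bigr)$ (this is also forced by conjugacy of the posterior update $s^{(i)}\mapsto s^{(i)}+x_{(i)}$). With that convention the correct shift is the full $(s^{(i)}_1)^{-1}s^{(i)}_{1/2}\,\xi^{(i)}_0$ with no factor $\tfrac12$ — the paper centers its Gaussian at $M=-(s^{(i)}_1)^{-1}s^{(i)}_{1/2}\,\xi^{(i)}_0$ — and the released term is the full $+\langle \xi^{(i)}_0\mid (s^{(i)}_{1/2})^*(s^{(i)}_1)^{-1}s^{(i)}_{1/2}\rangle$, which does yield the genuine Schur complement. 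Once this coefficient is corrected, the rest of your argument goes through and reproduces the stated formula.
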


\begin{proof}[\textbf{\upshape Proof:}]
  We have
  \begin{align*}
    \int_{\Phi^{(i)}}
    \pi^{(i)}_{s^{(i)}, t^{(i)}} \left( \phi^{(i)} \right)
    \,d\phi^{(i)}
    &=
    \int \left| \xi^{(i)}_0 \right|^{ t^{(i)} }
    \exp \left(
      - \left\langle \xi^{(i)}_0 \;\middle|\; s^{(i)}_0 \right\rangle
    \right)
    \left(
      \int
      \exp \left(
        - \left\langle {\xi^{(i)}_{1/2}} \, \left( \xi^{(i)}_0 \right)^{-1} \, {\xi^{(i)}_{1/2}}^* \;\middle|\; s^{(i)}_1 \right\rangle
        - \left\langle \xi^{(i)}_{1/2} \;\middle|\; s^{(i)}_{1/2} \right\rangle
      \right)
      \,d\xi^{(i)}_{1/2}
    \right)
    \,d\xi^{(i)}_{0}
    \nonumber\\
    &=
    \pi^{\frac{ \left( r_{(i)} - k^{(i)} \right) k^{(i)} }{ 2 } d }
    \left| s^{(i)}_1 \right|^{ - \frac{ k^{(i)} }{ 2 } d }
    \int \left| \xi^{(i)}_0 \right|^{ t^{(i)} + \frac{ \left( r_{(i)} - k^{(i)} \right) }{ 2 } d }
    \exp \left(
      - \left\langle
        \xi^{(i)}_0
      \;\middle|\;
        s^{(i)}_0 - {s^{(i)}_{1/2}}^* \, \left( s^{(i)}_1 \right)^{-1} \, {s^{(i)}_{1/2}}
      \right\rangle
    \right)
    \,d\xi^{(i)}_{0}
    \nonumber\\
    &=
    \pi^{\frac{ \left( r_{(i)} - k^{(i)} \right) k^{(i)} }{ 2 } d }
    \left| s^{(i)}_1 \right|^{ - \frac{ k^{(i)} }{ 2 } d }
    \left| s^{(i)}_0 - {s^{(i)}_{1/2}}^* \, \left( s^{(i)}_1 \right)^{-1} \, {s^{(i)}_{1/2}} \right|^{ - \left( t^{(i)} + \left( \frac{ r_{(i)} - 1 }{2} d + 1 \right) \right)}
    \Gamma_{k^{(i)}} \left( t^{(i)} + \left( \frac{ r_{(i)} - 1 }{2} d + 1 \right) \right)
    \nonumber \\
    &=
    \pi^{\frac{ \left( r_{(i)} - k^{(i)} \right) k^{(i)} }{2}d} \,
    \Gamma_{k^{(i)}} \left( t^{(i)} + \left( \frac{ r_{(i)} - 1 }{2} d + 1 \right) \right)
    \left| s^{(i)}_1 \right|^{ t^{(2)} + \left( \frac{ r_{(i)} - k^{(i)} - 1 }{2} d + 1 \right)  }
    \left| s^{(i)} \right|^{ - \left( t^{(i)} + \left( \frac{ r_{(i)} - 1 }{2} d + 1 \right) \right)}
  \end{align*}
  for $s^{(i)} \in E_{r_{(i)}}^+$ and $t^{(i)} > - \left( \left( r_{(i)} - k^{(i)} \right) / 2 \right) d - 1$, where we have used the equation
  \begin{align*}
    \int
      \exp
      \left(
        \tr
        \left(
          - V^{-1} \left( z - M \right) U^{-1} \left( z - M \right)^T
        \right)
      \right)
    dz
    =
    \pi^{\frac{mn}{2}d}
    \left| V \right|^{\frac{n}{2}d}
    \left| U \right|^{\frac{m}{2}d}
  \end{align*}
  for $U \in E^+_n$ and $V \in E^+_m$ to calculate
  \begin{align*}
    &
    \int
      \exp \left(
        - \left\langle {\xi^{(i)}_{1/2}} \, \left( \xi^{(i)}_0 \right)^{-1} \, {\xi^{(i)}_{1/2}}^* \;\middle|\; s^{(i)}_1 \right\rangle
        - \left\langle \xi^{(i)}_{1/2} \;\middle|\; s^{(i)}_{1/2} \right\rangle
      \right)
    \,d\xi^{(i)}_{1/2}
    \nonumber \\
    &\qquad=
    \int
    \exp \left(
      - \tr \left(
        \left( \xi^{(i)}_0 \right)^{-1} \left( \xi^{(i)}_{1/2} - M \right)^T s^{(i)}_1 \left( \xi^{(i)}_{1/2} - M \right)
      \right)
      + \tr \left(
        \left( s^{(i)}_{1/2} \right)^* \left( s^{(i)}_1 \right)^{-1} \left( s^{(i)}_{1/2} \right) \xi^{(i)}_0
      \right)
    \right)
    \,d\xi^{(i)}_{1/2}
    \nonumber \\
    &\qquad=
    \pi^{\frac{ \left( r_{(i)} - k^{(i)} \right) k^{(i)} }{ 2 } d } \left| \xi^{(i)}_0 \right|^{ \frac{ r_{(i)} - k^{(i)} }{ 2 } d } \left| s^{(i)}_1 \right|^{ - \frac{ k^{(i)} }{ 2 } d }
    \exp \left(\left\langle
      \xi^{(i)}_0
    \;\middle|\;
      {s^{(i)}_{1/2}}^* \, \left( s^{(i)}_1 \right)^{-1} \, {s^{(i)}_{1/2}}
    \right\rangle\right)
  \end{align*}
  with an $\left( r_{(i)} - k^{(i)} \right) \times k^{(i)}$ matrix $M := - \left( s^{(i)}_1 \right)^{-1} s^{(i)}_{1/2} \; \xi^{(i)}_0$.
\end{proof}

\begin{lemma}
  Let $\mu > \left( r - 1 \right) d / 2$ and $\nu > \left( r - 1 \right) d / 2$.
  The risk $R^{\mu, \nu} \left( \xi, \delta^{\mu, \nu}_{\mathbf{s}, \mathbf{t}} \right)$ of the Bayesian predictive distribution $\delta^{\mu, \nu}_{\mathbf{s}, \mathbf{t}}$ based on the enriched standard conjugate prior distribution (\ref{definition:enriched_standard_conjugate_prior}) is calculated as
  \begin{align*}
    R^{\mu, \nu} \left( \xi, \delta^{\mu, \nu}_{\mathbf{s}, \mathbf{t}} \right)
    =
    \sum_{i=1}^h
    R^{(i) \mu, \nu} \left( \xi, \delta^{(i) \mu, \nu}_{s^{(i)}, t^{(i)}} \right)
    \,,
  \end{align*}
  where
  \begin{align}
    R^{ (i) \mu, \nu } \left( \phi^{(i)}, \delta^{ (i) \mu, \nu }_{s^{(i)}, t^{(i)}} \right)
    &=
    - \nu k^{(i)}
    + \nu \log \left| \xi^{(i)}_0 \right|
    - \log \Gamma_{k^{(i)}} \left( t^{(i)} + \mu + \nu + \frac{n_{(i)}}{r_{(i)}} \right)
    + \log \Gamma_{k^{(i)}} \left( t^{(i)} + \mu + \frac{n_{(i)}}{r_{(i)}} \right)
    \nonumber\\
    &\quad
    + \left( t^{(i)} + \mu + \nu + \frac{n_{(i)}}{r_{(i)}} \right)
    E \left[ \log \left| s^{(i)} + X_{(i)} + Y_{(i)} \right| \right]
    - \left( t^{(i)} + \mu + \frac{n_{(i)}}{r_{(i)}} \right)
    E \left[ \log \left| s^{(i)} + X_{(i)} \right| \right]
    \nonumber\\
    &\quad
    - \left( t^{(i)} + \mu + \nu + \frac{n_{(i-1)}}{r_{(i-1)}} \right)
    E \left[ \log \left| s^{(i)}_1 + X^{(i)}_1 + Y^{(i)}_1 \right| \right]
    + \left( t^{(i)} + \mu + \frac{n_{(i-1)}}{r_{(i-1)}} \right)
    E \left[ \log \left| s^{(i)}_1 + X^{(i)}_1 \right| \right]
    \label{lemma:R_i}
  \end{align}
  for $s^{(i)} \in \overline{E_{r_{(i)}}^+}$, $t^{(i)} > - \mu - \left( \left( r_{(i)} - k^{(i)} \right) / 2 \right) d - 1$,
  and $E$ denotes the expectation over the distributions $X \sim W_r \left( \mu, \phi \right)$ and $Y \sim W_r \left( \nu, \phi \right)$.
  The last two terms in (\ref{lemma:R_i}) are interpreted as $0$ if $i = 1$.
\end{lemma}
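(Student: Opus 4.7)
The plan is to reduce the problem to $h$ independent one-dimensional calculations by exploiting the conditional reducibility of the Wishart family (Section \ref{section:conditional_reducibility_wishart}) together with the factored form (\ref{definition:enriched_standard_conjugate_prior}) of the enriched standard conjugate prior. Since both the likelihood and the prior factor through the blocks $\phi^{(i)}$, the posterior $\pi^{\mu}_{\mathbf{s}, \mathbf{t}} \left( \phi \mid x \right) d\phi = \prod_i \pi^{(i) \mu}_{s^{(i)}, t^{(i)}} \left( \phi^{(i)} \mid x_{(i)} \right) d\phi^{(i)}$ factorizes as well, and so does the Bayesian predictive density. The risk therefore decomposes into the sum of conditional risks, and it suffices to derive the stated formula (\ref{lemma:R_i}) for each $R^{(i) \mu, \nu}$ separately.

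For each $i$, I would first realize the conditional likelihood $p^{(i) \mu} \left( x^{(i)} \mid x_{(i-1)}, \phi^{(i)} \right)$ as the ratio $p^{\mu} \left( x_{(i)} \mid \zeta_{(i)} \right) / p^{\mu} \left( x_{(i-1)} \mid \zeta_{(i-1)} \right)$. Using $\left| \zeta_{(i)} \right| = \left| \zeta_{(i-1)} \right| \left| \xi^{(i)}_0 \right|$ and expanding $\left\langle \zeta_{(i)} \mid x_{(i)} \right\rangle - \left\langle \zeta_{(i-1)} \mid x_{(i-1)} \right\rangle$ in the block form $\left( r_{(i-1)}, k^{(i)} \right)$, the $\phi^{(i)}$-dependent part of this ratio matches the functional form of (\ref{definition:enriched_standard_conjugate_prior_i}) with effective hyperparameter $\left( x_{(i)}, \mu \right)$. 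Multiplying by $\pi^{(i)}_{s^{(i)}, t^{(i)}}$ thus yields a posterior of the same enriched standard conjugate form, with updated hyperparameters $\left( s^{(i)} + x_{(i)}, \, t^{(i)} + \mu \right)$. It then follows that the conditional Bayesian predictive density is
\begin{align*}
  \delta^{(i) \mu, \nu}_{s^{(i)}, t^{(i)}} \left( y^{(i)} \;\middle|\; y_{(i-1)}, x_{(i)} \right)
  =
  g^{(i) \nu} \left( y_{(i)}, y_{(i-1)} \right)
  \cdot
  \frac{ C^{(i)} \left( s^{(i)} + x_{(i)} + y_{(i)}, \, t^{(i)} + \mu + \nu \right) }{ C^{(i)} \left( s^{(i)} + x_{(i)}, \, t^{(i)} + \mu \right) },
\end{align*}
where $g^{(i) \nu}$ collects the $\phi^{(i)}$-independent factors of $p^{(i) \nu}$ and $C^{(i)} \left( s, t \right)$ is the normalization constant (\ref{lemma:normalization_constant}) from the preceding Lemma.

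Forming $\log p^{(i) \nu} \left( Y^{(i)} \mid Y_{(i-1)}, \phi^{(i)} \right) - \log \delta^{(i) \mu, \nu}_{s^{(i)}, t^{(i)}} \left( Y^{(i)} \mid Y_{(i-1)}, X_{(i)} \right)$ causes the prefactor $g^{(i) \nu}$ to cancel identically, leaving the deterministic term $\nu \log \left| \xi^{(i)}_0 \right|$, the exponential-linear term $- \left\langle \zeta_{(i)} \mid Y_{(i)} \right\rangle + \left\langle \zeta_{(i-1)} \mid Y_{(i-1)} \right\rangle$, the Gamma-function difference $- \log \Gamma_{k^{(i)}} \left( t^{(i)} + \mu + \nu + n_{(i)} / r_{(i)} \right) + \log \Gamma_{k^{(i)}} \left( t^{(i)} + \mu + n_{(i)} / r_{(i)} \right)$, and the four log-determinant terms coming from the explicit form of the ratio of $C^{(i)}$. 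Taking $E_{X, Y}$ then finishes the job: the linear term evaluates to $- \nu r_{(i)} + \nu r_{(i-1)} = - \nu k^{(i)}$ using $E \left[ Y_{(i)} \right] = \nu \zeta_{(i)}^{-1}$ and $E \left[ Y_{(i-1)} \right] = \nu \zeta_{(i-1)}^{-1}$, while the four $E \left[ \log \left| \cdot \right| \right]$ terms are exactly the four expectation terms appearing in (\ref{lemma:R_i}). The main obstacle I anticipate is Step 2: carefully matching the inner product conventions in (\ref{definition:enriched_standard_conjugate_prior_i}), in particular the off-diagonal pairing $\left\langle \xi^{(i)}_{1/2} \mid s^{(i)}_{1/2} \right\rangle$, against the block expansion of the full Hermitian inner product $\left\langle \zeta_{(i)} \mid x_{(i)} \right\rangle$ (which contributes a factor of two from the cross term), so that the posterior update is cleanly $s^{(i)} \mapsto s^{(i)} + x_{(i)}$ as an element of $E_{r_{(i)}}$; once this is in place, the remainder is bookkeeping using Wishart moments and the normalization-constant formula of the preceding Lemma.
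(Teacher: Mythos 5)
Your proposal is correct and follows essentially the same route as the paper: the paper's proof likewise factorizes the predictive distribution via conditional reducibility, obtains the conditional predictive density from the conjugate hyperparameter update (so that the log-ratio is a ratio of the normalization constants of the preceding lemma), and then takes expectations using $E \left[ Y_{(i)} \right] = \nu \left( \zeta_{i} \right)^{-1}$ to get the $- \nu k^{(i)}$ term and the four $E \left[ \log \left| \cdot \right| \right]$ terms. The paper compresses your Step 2 into a single ``direct computation,'' but the underlying mechanism is identical.
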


\begin{proof}[\textbf{\upshape Proof:}]
  The Bayesian predictive distribution $\delta^{\mu, \nu}_{\mathbf{s}, \mathbf{t}} \left( y \;\middle|\; x \right) dy$ based on the prior distribution (\ref{definition:enriched_standard_conjugate_prior}) is calculated as the product
  \begin{align*}
    \delta^{\mu, \nu}_{\mathbf{s}, \mathbf{t}} \left( y \;\middle|\; x \right) dy
    =
    \prod_{i=1}^h
    \delta^{(i) \mu,\nu}_{s^{(i)}, t^{(i)}} \left( y^{(i)} \;\middle|\; x_{(i)}, y_{(i-1)} \right) dy^{(i)}
  \end{align*}
  of the conditional Bayesian predictive distributions
  \begin{align}
    &
    \delta^{(i) \mu,\nu}_{s^{(i)}, t^{(i)}} \left( y^{(i)} \;\middle|\; x_{(i)}, y_{(i-1)} \right) dy^{(i)}
    :=
    \left(
      \int_{\Phi^{(i)}}
      p^{(i) \nu} \left( y^{(i)} \;\middle|\; y_{(i-1)}, \phi^{(i)} \right)
      \pi^{(i) \mu}_{s^{(i)}, t^{(i)}} \left( \phi^{(i)} \;\middle|\; x_{(i)} \right)
      d\phi^{(i)}
    \right)
    dy^{(i)}
    \label{definition:CBPD_ESCP}
  \end{align}
  for $s^{(i)} \in \overline{E_{r_{(i)}}^+}$, $t^{(i)} > - \mu - \left( \left( r_{(i)} - k^{(i)} \right) / 2 \right) d - 1$.
  Direct computation yields 
  \begin{align*}
    - 
    \log
    \frac{
      \delta^{\mu,\nu}_{s^{(i)}, t^{(i)}} \left( y^{(i)} \;\middle|\; x_{(i)}, y_{(i-1)} \right)
    }{
      p^{\nu(i)} \left( y^{(i)} \;\middle|\; y_{(i-1)}, \phi^{(i)} \right)
    }
    &=
    \nu \log \left| \xi^{(i)}_0 \right|
    -
    \left(
      \left\langle \zeta_i \;\middle|\; y_{(i)} \right\rangle
      -
      \left\langle \zeta_{i-1} \;\middle|\; y^{(i)}_1 \right\rangle
    \right)
    - \log \Gamma_{k^{(i)}} \left( t^{(i)} + \mu + \nu + \frac{n_{(i)}}{r_{(i)}} \right)
    + \log \Gamma_{k^{(i)}} \left( t^{(i)} + \mu + \frac{n_{(i)}}{r_{(i)}} \right)
    \nonumber\\
    &\quad
    + \left( t^{(i)} + \mu + \nu + \frac{n_{(i)}}{r_{(i)}} \right) \log \left| s^{(i)} + x_{(i)} + y_{(i)} \right|
    - \left( t^{(i)} + \mu + \frac{n_{(i)}}{r_{(i)}} \right) \log \left| s^{(i)} + x_{(i)} \right|
    \nonumber\\
    &\quad
    - \left( t^{(i)} + \mu + \nu + \frac{n_{(i-1)}}{r_{(i-1)}} \right) \log \left| s^{(i)}_1 + x^{(i)}_1 + y^{(i)}_1 \right|
    + \left( t^{(i)} + \mu + \frac{n_{(i-1)}}{r_{(i-1)}} \right) \log \left| s^{(i)}_1 + x^{(i)}_1 \right|
    \,.
  \end{align*}  
  Recall that we have $y_{(i)} = \left( y_{(i-1)}, y^{(i)} \right)$, and $y^{(i)}_1 = y_{(i-1)}$.
  $E \left[ Y^{(i)} \right] = \nu \left( \zeta_{i} \right)^{-1}$, and $E \left[ Y^{(i)}_1 \right] = E \left[ Y^{(i-1)} \right] = \nu \left( \zeta_{i-1} \right)^{-1}$ are used to compute the risk
  \begin{align*}
    R^{ (i) \mu, \nu } \left( \phi^{(i)}, \delta^{ (i) \mu, \nu }_{s^{(i)}, t^{(i)}} \right)
    :=
    E \left[
      - 
      \log
      \frac{
        \delta^{\mu,\nu}_{s^{(i)}, t^{(i)}} \left( Y^{(i)} \;\middle|\; X_{(i)}, Y_{(i-1)} \right)
      }{
        p^{(i)\nu} \left( Y^{(i)} \;\middle|\; Y_{(i-1)}, \phi^{(i)} \right)
      }
    \right]
  \end{align*}
  of the conditional Bayesian predictive distribution (\ref{definition:CBPD_ESCP}).
\end{proof}

\begin{proof}[\textbf{\upshape Proof of Proposition \ref{proposition:main_exact}:}]
Note that $X_{(i)} + Y_{(i)} \sim W_{r_{(i)}} \left( \mu + \nu, \zeta_i \right)$, $X_{(i)} \sim W_{r_{(i)}} \left( \mu, \zeta_i \right)$, $X^{(i)}_1 + Y^{(i)}_1 \sim W_{r_{(i-1)}} \left( \mu + \nu, \zeta_{i-1} \right)$, and $X^{(i)}_1  \sim W_{r_{(i-1)}} \left( \mu, \zeta_{i-1} \right)$ if $X \sim W_r \left( \mu, \phi \right)$ and $Y \sim W_r \left( \nu, \phi \right)$.
We see that (\ref{lemma:R_i}) with $\mathbf{s} = 0$ yields the proof because we have $E \left[ X_{(i)} + Y_{(i)} \right] = \psi_{r_{(i)}} \left( \mu + \nu \right) - \log \left| \zeta_i \right|$, $ E \left[ X_{(i)} \right] = \psi_{r_{(i)}} \left( \mu \right) - \log \left| \zeta_i \right|$, $E \left[ X^{(i)}_1 + Y^{(i)}_1 \right] = \psi_{r_{(i-1)}} \left( \mu + \nu \right) - \log \left| \zeta_{i-1} \right|$, $E \left[ X^{(i)}_1 \right] = \psi_{r_{(i-1)}} \left( \mu \right) - \log \left| \zeta_{i-1} \right|$, and $\left| \zeta_i \right| = \left| \zeta_{i-1} \right| \left| \xi^{(i)}_0 \right|$.
\end{proof}

\section{Multivariate polygamma function}
\label{section:MPF}

There exists a finite term expansion of the log-gamma function
\begin{align}
	\log \Gamma (z) &= \left( z - \frac{1}{2}  \right) \log z - z + \frac{1}{2} \log 2 \pi
	+ 2 \int_0^{\infty} \frac{\arctan (t/z)}{e^{2 \pi t} - 1} \,dt
	\nonumber \\
	&= \left( z - \frac{1}{2}  \right) \log z - z + \frac{1}{2} \log 2 \pi
	+ \sum_{n=1}^N \frac{(-1)^{n-1} B_{2n}}{2n (2n - 1) z^{2n - 1}}
	+ (-1)^{N} \frac{2}{z^{2N - 1}} \int_0^{\infty} \left( \int_{0}^{t} \frac{u^{2n} \,du}{u^2 + z^2} \right) \frac{dt}{e^{2 \pi t} - 1}
	\label{formula:loggamma_finite_expansion}
\end{align}
and that of the digamma function
\begin{align}
	\psi (z) &= \frac{d}{dz} \log \Gamma (z) = \log z - \frac{1}{2z} - 2 \int_0^{\infty} \frac{t}{(t^2 + z^2)(e^{2 \pi t} - 1)} \,dt
	\nonumber \\
	&= \log z - \frac{1}{2z} - \sum_{n=1}^N \frac{B_{2n}}{2n z^{2n}} + (-1)^{N+1} \frac{2}{z^{2N}} \int_0^{\infty} \frac{t^{2N + 1}}{(t^2 + z^2)(e^{2 \pi t} - 1)} \,dt
	\label{formula:digamma_finite_expansion}
\end{align}
for $\Re (z) > 0$, where $B_{2n}$ are Bernoulli numbers (e.g., $B_0 = 1, B_2 = 1/6, B_4 = - 1/30, \ldots$)(see \cite{whittaker2021course}).
Note that some authors use $B_n$ to denote the Bernoulli number $B_{2n}$.
Using equations (\ref{formula:loggamma_finite_expansion}) and (\ref{formula:digamma_finite_expansion}),
we can calculate the upper and lower bounds of the log-gamma and digamma functions of any order because the integrands in (\ref{formula:loggamma_finite_expansion}) and (\ref{formula:digamma_finite_expansion}) are positive.

An asymptotic expansion of the multivariate log-gamma function
\begin{align}
  \log \Gamma_r \left( \mu + x \right)
  &= r \left( \mu \log \mu \right)
  - r \mu
  +
  \left(
    -
    \frac{1}{4} d r^2
    +
    \frac{1}{4} \left( d - 2 \right) r
    +
    r x
  \right)
  \log \mu
  +
  \left(
    \frac{1}{4} d \left( \log \pi \right) r^2
    +
    \frac{1}{4} \left( - \left( \log \pi \right) d + 2 \left( \log 2 \pi \right) \right) r
  \right)
  \nonumber\\
  &\quad+
  \left(
    \frac{1}{24} d^2 r^3
    -
    \frac{1}{16} \left( d - 2 \right) d r^2
    +
    \frac{1}{48} \left( d^2 - 6 d + 4 \right) r
    +
    \left(
      -
      \frac{1}{4} d r^2
      +
      \frac{1}{4} \left( d - 2 \right) r
    \right)
    x
    +
    \frac{1}{2} r x^2
  \right)
  \mu^{-1}
  \nonumber\\
  &\quad+
  \left(
    \frac{1}{192} d^3 r^4
    -
    \frac{1}{96} \left( d - 2 \right) d^2 r^3
    +
    \frac{1}{192} \left( d^2 - 6 d + 4 \right) d r^2
    +
    \frac{1}{96} \left( d - 2 \right) d r
  \right.
  \nonumber\\
  &\qquad\qquad
  \left.
    +
    \left(
      -
      \frac{1}{24} d^2 r^3
      +
      \frac{1}{16} \left( d - 2 \right) d r^2
      -
      \frac{1}{48} \left( d^2 - 6 d + 4 \right) r
    \right)
    x
  \right.
  \left.
    +
    \left(
      \frac{1}{8} d r^2
      -
      \frac{1}{8} \left( d - 2 \right) r
    \right)
    x^2
    -
    \frac{1}{6} r x^3
  \right)
  \mu^{-2}
  \nonumber\\
  &\quad+
  O \left( \mu^{-3} \right)
  \label{formula:multivariate_loggamma_mu_x}
\end{align}
and that of the multivariate digamma function
\begin{align}
  \psi_r \left( \mu + x \right)
  &= r \log \mu 
  +
  \left(
    -
    \frac{1}{4} d r^2
    +
    \frac{1}{4} \left( d - 2 \right) r
    +
    r x
  \right)
  \mu^{-1}
  \nonumber \\
  &\quad+
  \left(
    -
    \frac{1}{24} d^2 r^3
    +
    \frac{1}{16} \left( d - 2 \right) d r^2
    -
    \frac{1}{48} \left( d^2 - 6 d + 4 \right) r
    +
    \left(
      \frac{1}{4} d r^2
      -
      \frac{1}{4} \left( d - 2 \right) r
    \right)
    x
    -
    \frac{1}{2} r
    x^2
  \right)
  \mu^{-2}
  +
  O \left( \mu^{-3} \right)
  \label{formula:multivariate_polygamma_mu_x}
\end{align}
are helpful in the present study.

\section*{Acknowledgments}

The authors would like to express their gratitude to Keisuke Yano for his helpful comments and suggestions.

\bibliographystyle{myjmva}
\bibliography{report}

\end{document}